\documentclass[reqno]{amsart}
\usepackage{amsmath, amsthm, amssymb, amstext}

\usepackage[left=2.9cm,right=2.9cm,top=3cm,bottom=3cm]{geometry}
\usepackage{hyperref,xcolor}
\hypersetup{pdfborder={0 0 0},colorlinks}
\usepackage{enumitem}
\setlength{\parindent}{1.2em}
\allowdisplaybreaks
\raggedbottom
\usepackage{todonotes}

\newtheorem{theorem}{Theorem}
\newtheorem{remark}[theorem]{Remark}
\newtheorem{lemma}[theorem]{Lemma}
\newtheorem{proposition}[theorem]{Proposition}
\newtheorem{corollary}[theorem]{Corollary}


\newcommand*\diff{\mathrm{d}}
\newcommand{\R}{\mathbb{R}}
\newcommand{\N}{\mathbb{N}}
\newcommand{\Z}{\mathbb{Z}}

\newcommand\V{W^{1,\mathcal{H}}(\R^N)}
\DeclareMathOperator{\supp}{supp}
\DeclareMathOperator{\loc}{loc}

\numberwithin{theorem}{section}
\numberwithin{equation}{section}

\title[Existence of ground state solutions for a Choquard double phase problem]
{Existence of ground state solutions for a Choquard double phase problem}

\author[R. Arora]{Rakesh Arora}
\address[R. Arora]{Department of Mathematical Sciences, Indian Institute of Technology  Varanasi (IIT-BHU), Uttar Pradesh-221005, India}
\email{rakesh.mat@iitbhu.ac.in, arora.npde@gmail.com}

\author[A. Fiscella]{Alessio Fiscella}
\address[A. Fiscella]{Dipartimento di Matematica e Applicazioni, Universit\`a degli Studi di Milano-Bicocca, Via Cozzi 55, Milano, CAP 20125, Italy}
\email{alessio.fiscella@unimib.it}

\author[T. Mukherjee]{Tuhina Mukherjee}
\address[T. Mukherjee]{Department of Mathematics, Indian Institute of Technology Jodhpur, Rajasthan-506004, India-342037}
\email{tuhina@iitj.ac.in}

\author[P. Winkert]{Patrick Winkert}
\address[P. Winkert]{Technische Universit\"{a}t Berlin, Institut f\"{u}r Mathematik, Stra\ss e des 17. Juni 136, 10623 Berlin, Germany}
\email{winkert@math.tu-berlin.de}

\begin{document}

\begin{abstract}
	In this paper we study quasilinear elliptic equations driven by the double phase operator involving a Choquard term of the form
	\begin{align*}
		-\mathcal{L}_{p,q}^{a}(u) + |u|^{p-2}u+ a(x)  |u|^{q-2}u = \left( \int_{\R^N} \frac{F(y, u)}{|x-y|^\mu}\,\diff y\right)f(x,u)
		\quad\text{in } \R^N,
	\end{align*}
	where $\mathcal{L}_{p,q}^{a}$ is the double phase operator given by
	\begin{align*}
		\mathcal{L}_{p,q}^{a}(u):= \operatorname{div}\big(|\nabla u|^{p-2}\nabla u + a(x) |\nabla u|^{q-2}\nabla u \big), \quad u\in\V,
	\end{align*}
	$0<\mu<N$, $1<p<N$, $p<q<p+ \frac{\alpha p}{N}$, $0 \leq  a(\cdot)\in C^{0,\alpha}(\R^N)$ with $\alpha \in (0,1]$ and $f\colon\R^N\times\R\to\R$ is a continuous function that satisfies a subcritical growth. Based on the Hardy-Littlewood-Sobolev inequality, the Nehari manifold and variational tools, we prove the existence of ground state solutions of such problems under different assumptions on the data.
\end{abstract}

\subjclass{35A15, 35J15, 35J60, 35J62}
\keywords{Choquard term, double phase operator, ground state solutions, Nehari manifold, unbounded domain}

\maketitle

\section{Introduction}

In this paper, we are concerned with the existence of ground state solutions to the following double phase Choquard equation
\begin{align}\label{problem}
	-\mathcal{L}_{p,q}^{a}(u) + |u|^{p-2}u+ a(x)  |u|^{q-2}u = \left( \int_{\R^N} \frac{F(y,u)}{|x-y|^\mu}\,\diff y\right)f(x, u)
	\quad\text{in } \R^N,
\end{align}
where $\mathcal{L}_{p,q}^{a}$ is the so-called double phase operator given by
\begin{align}\label{operator_double_phase}
	\mathcal{L}_{p,q}^{a}(u):= \operatorname{div} \big(|\nabla u|^{p-2}\nabla u + a(x) |\nabla u|^{q-2}\nabla u \big), \quad u\in\V,
\end{align}
with $\V$ being an appropriate Musielak-Orlicz Sobolev space and $F$ denotes the primitive of the function $f \in C(\R^N\times\R)$ satisfying 
\begin{align}\label{new:nonl:growth}
		|f(x,t)|\leq C(|t|^{r_1-1}+ |t|^{r_2-1}) \ \text{for all} \ x \in \mathbb{R}^N \ \text{and} \ t \in \R
\end{align}
where 
\begin{align}\label{growth:exponent:cond}
	\frac{p(2N-\mu)}{2N} <r_1\leq r_2< \frac{p^*}{2}\left(2-\frac{\mu}{N}\right), \quad 0< \mu < N,
\end{align} 
and $p^*$ being the critical Sobolev exponent to $p$. The double phase operator given in \eqref{operator_double_phase} is related to the energy functional 
\begin{align}\label{integral_minimizer}
	\Phi(u)=\int_\Omega \big(|\nabla  u|^p+a(x)|\nabla  u|^q\big)\,\diff x,
\end{align}
which appeared for the first time in a work of Zhikov \cite{Zhikov-1986} in order to describe models for strongly anisotropic materials in the context of homogenization and elasticity, see also \cite{Zhikov-1995,Zhikov-2011} by the same author. A first mathematical treatment of double phase integrals like \eqref{integral_minimizer} has been done by Mingione et al.\,concerning regularity results for local minimizers of \eqref{integral_minimizer}. We refer to the works of Baroni-Colombo-Mingione \cite{Baroni-Colombo-Mingione-2015,Baroni-Colombo-Mingione-2016,Baroni-Colombo-Mingione-2018} and Colombo-Mingione \cite{Colombo-Mingione-2015a, Colombo-Mingione-2015b}, see also the recent paper of De Filippis-Mingione \cite{DeFilippis-Mingione-ARMA-2021} about nonautonomous integrals. Note that \eqref{integral_minimizer} also belongs to the class of the integral functionals with nonstandard growth condition, according to Marcellini’s terminology \cite{Marcellini-1991,Marcellini-1989b}.

The stationary Choquard equation 
\begin{align}\label{problem2}
	-\Delta u+V(x)u=\left( \int_{\R^N} \frac{|u(y)|^p}{|x-y|^\mu}\diff y\right)|u|^{p-2}u\quad\text{in }\R^N,
\end{align}
with $N\geq 3$ and $\mu \in (0,N)$ has many physical applications in quantum theory and arises in the theory of the Bose-Einstein condensation. See Lieb \cite{Lieb-1976-77} for an approximation to the Hartree-Fock theory of one-component plasma and Pekar \cite{Pekar-1954} for the study of the quantum theory of a polaron at rest. We also mention the work of Moroz-Penrose-Tod \cite{Moroz-Penrose-Tod-1997} in which \eqref{problem2} serves as a model of self-gravitating matter, known in that context as the Schr\"odinger-Newton equation. A useful guide on Choquard type equations  has been published by Moroz-Van Schaftingen \cite{Moroz-VanSchaftingen-2017}. 

Motivated by these numerous applications, lots of existence results for different type of equations involving Choquard terms have been published in the last decades. We refer to the famous works of Cingolani-Clapp-Secchi \cite{Cingolani-Clapp-Secchi-2012}, Lions \cite{Lions-1980}, Ma-Zhao \cite{Ma-Zhao-2010} and Moroz-Van Schaftingen \cite{Moroz-VanSchaftingen-2015,Moroz-VanSchaftingen-2013, Moroz-VanSchaftingen-2015b}. The treatment in our paper uses ideas of the papers of Alves-Yang \cite{Alves-Yang-2014} and Alves-Tavares \cite{Alves-Tavares-2019}. Indeed, in \cite{Alves-Yang-2014} the authors study a generalized Choquard equation given by
\begin{align}\label{problem3}
	-\varepsilon^p \Delta_p u+V(x)|u|^{p-2}u
	=\varepsilon^{\mu-N} \left(\int_{\R^N}\frac{Q(y)F(u(y))}{|x-y|^{\mu}}\,\diff y\right)Q(x)f(u) \quad\text{in }\R^N,
\end{align}
and establish a new concentration behavior of solutions of \eqref{problem3} by using variational methods. In \cite{Alves-Tavares-2019} a new Hardy-Littlewood-Sobolev inequality for variable exponents has been proved and applied to problems of the form 
\begin{align}\label{problem4}
	-\Delta_{p(x)} u+V(x)|u|^{p(x)-2}u
	= \left(\int_{\R^N}\frac{F(y,u(y))}{|x-y|^{\lambda(x,y)}}\,\diff y\right)f(x,u(x)) \quad\text{in }\R^N,
\end{align}
in order to get existence of solutions of \eqref{problem4}. Very recently, Sun-Chang \cite{Sun-Chang-2022} considered least energy nodal solutions for double phase problems with convolution-type nonlinearities of the form
\begin{equation*}
	\begin{aligned}
		-\operatorname{div}\big(|\nabla u|^{p-2}\nabla u + a(x) |\nabla u|^{q-2}\nabla u \big)&=\left(I_\mu\ast|u|^{r} \right)|u|^{r-2}u
		&&\text{in } \Omega,\\
		u&=0 &&\text{on } \partial\Omega,
	\end{aligned}
\end{equation*}
which becomes the classical nonlinear Choquard equation if $p=2$ and $a(x)\equiv 0$. Further existence results on Choquard type problems can be found in the papers of Alves-Gao-Squassina \cite{Alves-Gao-Squassina-Yang-2017}, Alves-Germano \cite{Alves-Germano-2018}, Arora-Giacomoni-Mukherjee-Sreenadh \cite{Arora-Giacomoni-Mukherjee-Sreenadh-2019,Arora-Giacomoni-Mukherjee-Sreenadh-2020}, Biswas-Tiwari \cite{Biswas-Tiwari-2021}, Ghimenti-Van Schaftingen \cite{Ghimenti-VanSchaftingen-2016}, Mingqi-R\u{a}dulescu-Zhang \cite{Mingqi-Radulescu-Zhang-2019}, Mukherjee-Sreenadh \cite{Mukherjee-Sreenadh-2017} and Zuo-Fiscella-Bahrouni \cite{Zuo-Fiscella-Bahrouni-2022}, see also the works of Chen-Fiscella-Pucci-Tang \cite{Chen-Fiscella-Pucci-Tang-2020} and Liu \cite{Liu-2010} for ground state solution type results. For double phase problems without Choquard term on the whole of $\R^N$ we refer to the recent works of Ge-Pucci \cite{Ge-Pucci-2022}, Le \cite{Le-2022}, Liu-Dai \cite{Liu-Dai-2020}, Liu-Winkert \cite{Liu-Winkert-2022} and Stegli\'{n}ski \cite{Steglinski-2022}, see also Hou-Ge-Zhang-Wang \cite{Hou-Ge-Zhang-Wang-2020} and Liu-Dai \cite{Liu-Dai-2018} for ground state solutions for double phase problems on bounded domains. 

In the present paper, we combine a double phase problem with a right-hand side of Choquard type nonlinearity and we look for ground state solutions. We consider the following assumptions:

\vspace{0.1cm}
\begin{enumerate}
	\item[\textnormal{(h$_1$)}]
		$N\geq 2$, $1<p<N$, $p<q<p+ \frac{\alpha p}{N}$ and $0 \leq  a(\cdot)\in C^{0,\alpha}(\R^N)$ with $\alpha \in (0,1]$ and $p^*$ being the critical Sobolev exponent to $p.$ 
\end{enumerate}

\begin{enumerate}
	\item[\textnormal{(h$_2$)}] 
The function $f$ satisfies the classical Ambrosetti-Rabinowitz condition  (AR-condition for short), that is,
		\begin{align}\label{AR-cond}
			0<\theta F(x, t) \leq 2tf(x, t) 
			\quad \text{for all }t>0, \ x \in \R^N \text{ and for some } \theta>q
		\end{align}
where  $F(x,t)= \int_0^t f(x,\tau)\,\diff \tau.$		

\end{enumerate}

\begin{enumerate}
	\item[\textnormal{(h$_3$)}]
		The modulating coefficient $a(\cdot)$ and the function $f(\cdot, t)$ is $\mathbb{Z}^N$ periodic for all $t \in \mathbb{R}$, that is,
		\begin{align*}
			a(x+y) = a(x)\quad \text{for all }  x \in \R^N  \text{ and for all }  y \in \mathbb{Z}^N,
		\end{align*}
		and 
\begin{align*}
	f(x +y,t) = f(x,t)
	\quad \text{for all } x \in \R^N  \text{ and for all }  y \in \Z^N.
\end{align*}
	\item[\textnormal{(h$_4$)}] 
		The mapping $\R \ni t \mapsto \dfrac{f(x,t)}{|t|^{\frac{q}{2}-2} t}$  is increasing if $t>0$ and decreasing if $t<0$ for all $x \in \R$.
\end{enumerate}
\vspace{0.1cm}

The first existence result of this paper reads as follows.

\begin{theorem}\label{main_result}
	Let hypotheses \textnormal{(h$_1$)}-\textnormal{(h$_3$)}  be satisfied. Then problem \eqref{problem} admits a nontrivial solution $v \in W^{1, \mathcal{H}}(\Omega)$. If in addition \textnormal{(h$_4$)} holds, then $v$ is a ground state solution of problem \eqref{problem}.
\end{theorem}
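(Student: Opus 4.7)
The plan is to apply the Mountain Pass Theorem to the associated energy functional $J\colon\V\to\R$ defined by
\begin{align*}
J(u) = \frac{1}{p}\int_{\R^N}(|\nabla u|^p+|u|^p)\,\diff x + \frac{1}{q}\int_{\R^N}a(x)(|\nabla u|^q+|u|^q)\,\diff x - \frac{1}{2}\int_{\R^N}\!\!\int_{\R^N}\frac{F(y,u(y))F(x,u(x))}{|x-y|^\mu}\,\diff y\,\diff x.
\end{align*}
First I would verify that $J\in C^1(\V)$, using the growth condition \eqref{new:nonl:growth}, the range \eqref{growth:exponent:cond}, and the Hardy--Littlewood--Sobolev inequality to control the double convolution term, while standard embeddings of the Musielak--Orlicz Sobolev space ensured by (h$_1$) handle the modular part. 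Then I would check the Mountain Pass geometry: the lower bound $r_1>p(2N-\mu)/(2N)$ combined with HLS gives $J(u)\ge\alpha>0$ on a small sphere in $\V$, while the Ambrosetti--Rabinowitz condition \eqref{AR-cond} with $\theta>q$ forces $F(x,t)\ge c|t|^{\theta/2}$ at infinity, hence $J(tu)\to-\infty$ as $t\to+\infty$ for any fixed $u\not\equiv 0$, producing the second geometric ingredient.

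Second, I would extract a Palais--Smale (or Cerami) sequence $(u_n)\subset\V$ at the mountain pass level $c>0$ and prove its boundedness via the standard estimate for $J(u_n)-\tfrac{1}{\theta}\langle J'(u_n),u_n\rangle$ using \eqref{AR-cond}, together with the fact that $p<q<\theta$ and $a\ge 0$. The decisive difficulty is the loss of compactness on all of $\R^N$, since the embeddings of $\V$ into $L^s(\R^N)$ are not compact. This is exactly the point where (h$_3$) enters: the $\Z^N$-periodicity of $a$ and $f$ makes both $J$ and the norm invariant under integer translations. I would apply a Lions-type vanishing lemma adapted to $W^{1,\mathcal H}(\R^N)$; either the weak limit of $(u_n)$ is already nontrivial, or we find translations $(y_n)\subset\Z^N$ so that $\tilde u_n(\cdot):=u_n(\cdot+y_n)$ has a nontrivial weak limit $v\in\V$. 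By invariance $\tilde u_n$ is still a Palais--Smale sequence for $J$ at level $c$, and a Brezis--Lieb type splitting for the nonlocal Choquard term—combined with the pointwise convergence $f(x,\tilde u_n)\to f(x,v)$—lets me pass to the limit and obtain $J'(v)=0$ with $v\ne 0$, giving the first conclusion.

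Third, to upgrade $v$ to a ground state under (h$_4$), I would work with the Nehari manifold
\begin{align*}
\mathcal N = \bigl\{u\in\V\setminus\{0\}\colon\langle J'(u),u\rangle=0\bigr\}.
\end{align*}
The monotonicity of $t\mapsto f(x,t)/(|t|^{q/2-2}t)$ in (h$_4$), together with $p<q$, yields that the fiber map $\psi_u(t)=J(tu)$ has, for every $u\neq 0$, a unique critical point $t(u)>0$ which is a strict global maximum. Hence $\mathcal N$ is a $C^1$ manifold of codimension one, and one can establish the standard identity
\begin{align*}
m:=\inf_{\mathcal N} J = \inf_{u\in\V\setminus\{0\}}\max_{t>0} J(tu) = c.
\end{align*}
Running the same translation/compactness scheme on a minimizing sequence $(w_n)\subset\mathcal N$ yields, thanks to (h$_3$), a nontrivial limit attaining $m$; since any nontrivial critical point of $J$ belongs to $\mathcal N$, the minimizer is a ground state.

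The principal obstacle is the twin loss of compactness from working on the unbounded domain $\R^N$ and from the nonlocal convolution term. These two difficulties are reconciled respectively by exploiting $\Z^N$-invariance under (h$_3$) to relocate concentration, and by the Hardy--Littlewood--Sobolev inequality together with a nonlocal Brezis--Lieb decomposition, which are needed to pass to the limit in the Choquard term along the translated sequences. A secondary technical point is verifying the Nehari structure in the double phase setting, where the fiber analysis must accommodate the two different homogeneities $p$ and $q$ and is where condition (h$_4$) is used in an essential way.
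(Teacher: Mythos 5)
Your existence argument follows the paper's route almost verbatim: mountain pass geometry from \eqref{new:nonl:growth}--\eqref{growth:exponent:cond} and the AR-condition, boundedness of the \textnormal{(PS)}$_b$-sequence via $I-\frac{1}{\theta}\langle I',\cdot\rangle$, exclusion of vanishing by Lions' lemma (since $b>0$), $\Z^N$-translations using \textnormal{(h$_3$)}, and passage to the limit in the Choquard term by Hardy--Littlewood--Sobolev plus pointwise convergence (the paper uses boundedness of the convolution operator $K$, a.e.\ convergence of gradients via Simon's inequality and dominated convergence rather than a Brezis--Lieb splitting, but this is the same scheme). That part is sound.

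The ground-state part, however, contains a genuine gap and misses the paper's shortcut. You propose to show that $\mathcal N$ is a $C^1$ manifold of codimension one, that every $u\neq 0$ has a unique projection $t(u)$ onto $\mathcal N$, that $m=\inf_{\mathcal N}J$ equals the mountain pass level, and then to minimize over $\mathcal N$ by rerunning the translation/compactness argument on a minimizing sequence $(w_n)\subset\mathcal N$. Under the standing hypotheses $f$ is only continuous, so $u\mapsto\langle J'(u),u\rangle$ need not be $C^1$ and the manifold structure (hence the natural-constraint property guaranteeing that a minimizer on $\mathcal N$ is a critical point) is not available without extra differentiability of $f$ or a Szulkin--Weth type generalized Nehari argument, neither of which you supply; moreover $\mathcal N$ is not weakly closed, so the weak limit of a translated minimizing sequence need not lie on $\mathcal N$, and a minimizing sequence on $\mathcal N$ is not automatically a Palais--Smale sequence. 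The paper avoids all of this: it first shows $I(v)\le b$ by applying Fatou's lemma to $I(v_n)-\frac{1}{\theta}\langle I'(v_n),v_n\rangle_{\mathcal H}$ along the translated \textnormal{(PS)}$_b$-sequence; then, using \textnormal{(h$_4$)} only to prove that for $u\in\mathcal N$ the fiber map $t\mapsto I(tu)$ attains its maximum at $t=1$ (no uniqueness or manifold structure needed), it constructs an admissible path through $t_\ast u$ to get $b\le I(u)$ for every $u\in\mathcal N$, i.e.\ $b\le m$. Since the critical point $v$ is nontrivial it lies in $\mathcal N$, and $m\le I(v)\le b\le m$ forces $I(v)=m$. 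To repair your write-up you should either adopt this comparison argument or explicitly invoke and verify a generalized Nehari framework valid for merely continuous $f$, together with a proof that the limit of your minimizing sequence is indeed a critical point.
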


In order to consider a large class of nonlinearities in the Choquard term, we weaken the hypothesis in the above result and replace our assumptions \textnormal{(h$_2$)} and \textnormal{(h$_4$)} by the following  assumption:

\begin{enumerate}
  \item[\textnormal{(h$_2'$)}]
		\begin{enumerate}
			\item[\textnormal{(i)}]
				There exists $\Theta \geq 1$ such that
				\begin{align}\label{Non-AR-cond}
					\Theta \mathcal{F}(x,t) \geq \mathcal{F}(x,st)
					\quad \text{ for all } t \in \R 
					\text{ and for all } s \in [0,1],
				\end{align}
				where $\mathcal{F}(x,t) = 2f(x,t) t - q F(x,t)$.
			\item[\textnormal{(ii)}] 
				$\lim_{|t| \to \infty} \dfrac{F(x,t)}{|t|^\frac{q}{2}} = \infty$ uniformly for all $x \in \R^N$.
			\end{enumerate}
\end{enumerate}
\vspace{0.1cm}

The assumption \textnormal{(h$_2'$)}(i) is originally due to Jeanjean \cite{Jeanjean-1999} in the case $p=q=2.$ It is important to note that the assumptions \textnormal{(h$_1$)}, \textnormal{(h$_2'$)} and \textnormal{(h$_3$)} allow us to consider a bigger class of nonlinearities, in particular outside the class of functions satisfying Ambrosetti-Rabinowitz condition \eqref{AR-cond}. An example of such function is $f(x,t) = g(x) |t|^{\frac{q}{2}-1} t \ln(1+ |t|)$ where $g$ is a $\mathbb{Z}^N$ periodic bounded function. The Ambrosetti–Rabinowitz condition ensures that the corresponding Euler-Lagrange functional has the mountain pass geometry structure and the associated Palais-Smale sequence of the functional is bounded. Therefore, relaxing
AR -condition \eqref{AR-cond} not only includes a larger class of nonlinearities but also requires a careful geometrical analysis of corresponding Euler-Lagrange functional and compactness results. We make following remarks in light of assumptions \textnormal{(h$_1$)}, \textnormal{(h$_2'$)} and \textnormal{(h$_3$)}.

\begin{remark}
	Note that \eqref{growth:exponent:cond} can be equivalently written as
	\begin{align}\label{ineq:r1r2}
		p< \frac{2Nr_1}{2N-\mu} \leq \frac{2Nr_2}{2N-\mu}< p^*.
	\end{align}
\end{remark}

\begin{remark}\label{rema:1}
	Since $f(x,0)=0= F(x,0)$, thanks to \eqref{new:nonl:growth}, from \eqref{Non-AR-cond}, we get $\mathcal{F}(x,t) \geq 0$, that is,
	\begin{align}\label{non-neg:cond-1}
		2 s f(x,t) - q F(x,t) \geq 0
		\quad \text{for all } t \in \mathbb{R}.
	\end{align}
\end{remark}

\begin{remark}\label{rema:2}
	For $t>0$, using \eqref{non-neg:cond-1}, we have
	\begin{align*}
		\frac{\partial }{\partial t} \frac{F(x,t)}{t^{\frac{q}{2}}} = \frac{t^{\frac{q}{2}} f(x,t) - \frac{q}{2} t^{\frac{q}{2} -1} F(x,t)}{t^q} \geq 0.
	\end{align*}
	Moreover, from \eqref{new:nonl:growth}, we easily see that $\lim_{t \to 0^+} F(x,t)t^{\frac{-q}{2}} =0$. Combining these facts, we get $F(x,t) \geq 0$ for all $(x,t) \in \R^N \times \R$ with $t \geq 0.$ Repeating the same steps as above for $t\leq 0$, we obtain $F(x,t) \geq 0$ for all $t\in \R$. Thus, it holds
	\begin{align*}
		f(x,t) &\geq 0 
		\quad \text{for all } (x,t) \in \R^N \times \R \text{ with }t\geq 0,\\ 
		f(x,t) &\leq 0 
		\quad \text{for all }  (x,t) \in \R^N \times \R
		\text{ with }t\leq 0.
	\end{align*}
	Therefore, for all $x \in \R^N$, we obtain that $F(x,\cdot)$ is nondecreasing in $(0, \infty)$ and nonincreasing in $(-\infty,0).$
\end{remark}

Now we state our second result concerning the existence of ground state solution:

\begin{theorem}\label{main_result-new}
	Let hypotheses \textnormal{(h$_1$)}, \textnormal{(h$_2'$)} and \textnormal{(h$_3$)}  be satisfied. Then problem \eqref{problem} admits a nontrivial ground state solution.
\end{theorem}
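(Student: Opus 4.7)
The plan is to follow the variational scheme used for Theorem \ref{main_result}, with two essential modifications dictated by the weakening of the hypotheses: Jeanjean's monotonicity trick replaces the Ambrosetti--Rabinowitz computation in the boundedness step, and the ground-state property is obtained by direct minimisation over the set of nontrivial critical points instead of via Nehari fibering, since \textnormal{(h$_4$)} is no longer available. Define
\begin{align*}
J(u) = \int_{\R^N}\frac{|\nabla u|^p + |u|^p}{p}\,\diff x + \int_{\R^N} a(x)\,\frac{|\nabla u|^q + |u|^q}{q}\,\diff x - \frac{1}{2}\int_{\R^N}\int_{\R^N}\frac{F(x,u)F(y,u)}{|x-y|^\mu}\,\diff y\,\diff x
\end{align*}
on $\V$. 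Using the Hardy--Littlewood--Sobolev inequality, the growth conditions \eqref{new:nonl:growth}--\eqref{ineq:r1r2} and the embeddings of $\V$ one obtains $J\in C^1(\V,\R)$ and mountain pass geometry; the strict negativity $J(e)<0$ at some $e\in\V$ is provided here by the superquadratic growth \textnormal{(h$_2'$)}(ii) in place of \eqref{AR-cond}.

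The main obstacle is the boundedness of Cerami sequences $(u_n)\subset\V$ at the mountain pass level $c_{MP}>0$ in the absence of the AR-inequality. I would follow Jeanjean's scheme from \cite{Jeanjean-1999}: a short computation gives
\begin{align*}
qJ(u_n) - \langle J'(u_n),u_n\rangle = \frac{q-p}{p}\int_{\R^N}\bigl(|\nabla u_n|^p + |u_n|^p\bigr)\,\diff x + \frac{1}{2}\int_{\R^N}\int_{\R^N}\frac{F(x,u_n)\,\mathcal{F}(y,u_n)}{|x-y|^\mu}\,\diff y\,\diff x,
\end{align*}
and Remarks \ref{rema:1}--\ref{rema:2} ensure that both right-hand terms are non-negative. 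Arguing by contradiction, if $\|u_n\|\to\infty$, rescale $w_n := t_n u_n$ with $t_n = R/\|u_n\|\in(0,1]$ for $R$ large; the hypothesis \eqref{Non-AR-cond} transplants the integral bound to $\mathcal{F}(\cdot,w_n)$ up to the factor $\Theta$, while \textnormal{(h$_2'$)}(ii) forces the Choquard contribution to blow up along $(w_n)$, contradicting $J(w_n)\leq c_{MP}+o(1)$. This yields boundedness of $(u_n)$.

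To produce a nontrivial critical point I would exploit the $\Z^N$-periodicity from \textnormal{(h$_3$)}. If the weak limit $u_n\rightharpoonup u$ is nonzero, standard Brezis--Lieb-type arguments for the nonlocal Choquard nonlinearity (cf.\ \cite{Moroz-VanSchaftingen-2013,Alves-Yang-2014}) imply $J'(u)=0$. If instead $u\equiv 0$, a Lions-type non-vanishing lemma adapted to the Musielak--Orlicz setting (as in \cite{Liu-Winkert-2022}) produces $y_n\in\Z^N$ such that the translated sequence $\tilde u_n(x) := u_n(x+y_n)$ has a nontrivial weak limit; periodicity ensures that $\tilde u_n$ is again a Cerami sequence for $J$ at level $c_{MP}$, giving the desired nontrivial critical point.

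Finally, for the ground-state property, set $m := \inf\{J(v):v\in\V\setminus\{0\},\ J'(v)=0\}$. Every such $v$ satisfies the Nehari identity $\langle J'(v),v\rangle=0$, and combining it with \eqref{Non-AR-cond} and Remark \ref{rema:1} gives $J(v)\geq \delta>0$ uniformly on the set of nontrivial critical points. A minimising sequence for $m$ is then bounded by the Jeanjean step and non-vanishing by periodic translations, so up to translations and subsequences it converges to a critical point $v_0$ with $J(v_0)=m$, which is the sought ground state.
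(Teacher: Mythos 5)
Your overall scheme coincides with the paper's (mountain pass geometry, Cerami sequences, a Jeanjean-type boundedness argument, $\Z^N$-translations, minimisation over nontrivial critical points), but the boundedness step -- which is the whole point of dropping (AR) -- does not work as you wrote it. You rescale to $w_n=t_nu_n$ with $t_n=R/\|u_n\|$ and claim that (h$_2'$)(ii) forces the Choquard term to blow up along $(w_n)$, contradicting $J(w_n)\le c_{MP}+o(1)$. Neither half holds: $\|w_n\|=R$ is bounded, so superlinearity of $F$ at infinity produces no blow-up along $(w_n)$ -- in fact, when $v_n=u_n/\|u_n\|$ vanishes in Lions' sense the Choquard term at $w_n=Rv_n$ tends to $0$ -- and the bound $J(w_n)\le c_{MP}+o(1)$ is nowhere justified ($w_n$ lies on no mountain pass path; the only bound available, via \eqref{Non-AR-cond}, is $\max_{t\in[0,1]}J(tu_n)\le\Theta\bigl(J(u_n)-\tfrac1q\langle J'(u_n),u_n\rangle\bigr)\to\Theta c$). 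The missing idea is the vanishing/non-vanishing dichotomy for $v_n$ itself, with the two hypotheses used in different cases: if $v_n$ does not vanish, translate by $y_n\in\Z^N$, obtain a nontrivial a.e.\ limit, and apply (h$_2'$)(ii) to $u_n$ on the set where that limit is nonzero to show that the Choquard energy grows faster than $\|u_n\|^q$, contradicting $J(u_n)\to c$; if $v_n$ vanishes, then $J(Rv_n)\ge \min\{R^p,R^q\}/q+o(1)$ for every $R>1$, hence $\max_{t\in[0,1]}J(tu_n)\to\infty$, contradicting the $\Theta$-bound above. This is exactly how Proposition \ref{cerami-bdd} of the paper argues; without separating the two cases your contradiction does not materialise.

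Two further points in the ground-state step need repair. First, the uniform bound $J(v)\ge\delta>0$ over nontrivial critical points does not follow from the Nehari identity, \eqref{Non-AR-cond} and Remark \ref{rema:1} alone -- these give only $J(v)\ge\frac{q-p}{pq}\|v\|_{1,p}^p\ge0$. You need a quantitative input: either combine the Nehari identity with the Hardy--Littlewood--Sobolev growth estimate \eqref{hls-new16} to force $\|v\|_{1,p}\ge c_0>0$ (whence $J(v)\ge\frac{q-p}{pq}c_0^p$), or, as the paper does via Lemma \ref{eps-lem}, show $\langle J'(u),u\rangle\ge\varepsilon\|u\|^q$ for $0<\|u\|<r_0$, so nontrivial critical points satisfy $\|v\|\ge r_0$; one of these is what actually rules out vanishing of the minimising sequence $(w_n)$, since vanishing forces $\rho(w_n)\to0$. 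Second, weak convergence of the translated minimising sequence does not give $J(v_0)=m$ because $J$ is not weakly continuous; the correct closing argument is Fatou's lemma applied to the nonnegative representation $J-\tfrac1q\langle J',\cdot\rangle$ (using a.e.\ convergence of gradients as in Proposition \ref{grad-con}) to get $J(v_0)\le m$, combined with $J(v_0)\ge m$ because $v_0$ is itself a nontrivial critical point.
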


\begin{remark}
	It is easy to see that \textnormal{(h$_4$)} implies \textnormal{(h$_2'$)(i)}. Indeed, for $0< t_1 \leq t_2$, we have
	\begin{align*}
		&\mathcal{F}(x,t_2) - \mathcal{F}(x,t_1)\\
		& = q \left[ \frac{2}{q} \left(f(x,t_2) t_2 - f(x,t_1) t_1\right) - \left(F(x,t_2) - F(x,t_1)\right) \right]\\
		& = q \left[\int_{0}^{t_2} \frac{f(x,t_2) \tau^{\frac{q}{2}-1} }{t_2^{\frac{q}{2}-1}} ~d\tau - \int_{0}^{t_1} \frac{f(x,t_1) \tau^{\frac{q}{2}-1} }{t_1^{\frac{q}{2}-1}} ~d\tau - \int_{t_1}^{t_2} \frac{f(x,\tau) \tau^{\frac{q}{2}-1}}{\tau^{\frac{q}{2}-1}} ~d\tau \right]\\
		& = q \left[\int_{t_1}^{t_2} \left(\frac{f(x,t_2) }{t_2^{\frac{q}{2}-1}} - \frac{f(x,\tau)}{\tau^{\frac{q}{2}-1}} \right) \tau^{\frac{q}{2}-1} ~d\tau + \int_0^{t_1}  \left(\frac{f(x,t_2) }{t_2^{\frac{q}{2}-1}} - \frac{f(x,t_1)}{t_1^{\frac{q}{2}-1}} \right) \tau^{\frac{q}{2}-1} ~d\tau \right] \geq 0,
	\end{align*}
	that is, $\mathcal{F}$ is increasing for $t \geq 0$. Analogously, $\mathcal{F}$ is decreasing for $t \leq 0$ and so,  \textnormal{(h$_2'$(i))} is satisfied.
\end{remark}

The proofs of Theorems \ref{main_result} and \ref{main_result-new}
rely on variational tools in combination with the Hardy-Littlewood-Sobolev inequality and the Nehari manifold. Indeed, let $I\colon\V\to\R$ be the energy functional of \eqref{problem}, then the Nehari manifold is defined as the set
\begin{align*}
	\mathcal{N}= \left\{u\in \V \setminus \{0\}:\; \langle  I'(u), u \rangle_{\mathcal{H}}=0\right \},
\end{align*}
where $\langle \cdot,\cdot\rangle_{\mathcal{H}}$ denotes the duality pairing between $\V$ and $\V^*$. It is clear that the set $\mathcal{N}$ is smaller than the whole space $\V$, but it contains all critical points of $I$ which are weak solutions of \eqref{problem}. We are looking for an element of $\mathcal{N}$ which realizes the infimum of $ \inf_{u\in \mathcal N}I(u)$. Such a function is a ground state solution of \eqref{problem}.

The paper is organized as follows. In Section \ref{sec_2} we present the main properties and embedding results for the Musielak-Orlicz Sobolev space $\V$ on the whole of $\R^N$ and we  recall the Hardy-Littlewood-Sobolev inequality which is used in our considerations. Section \ref{sec_3} is devoted to the proof of Theorem \ref{main_result} whereby the first part of this theorem is proved in Theorem \ref{exis:res}. Finally, in Section \ref{sec_4} we give the proof of Theorem \ref{main_result-new} without assuming the AR-condition.

\section{Preliminaries}\label{sec_2}

This section is devoted to recall the main preliminaries which are needed in the sequel, for example, the properties of the Musielak-Orlicz Sobolev space $\V$ and the Hardy-Littlewood-Sobolev inequality.

As usual, we denote by $L^r(\R)$ and  $L^r(\R^N)$ the classical Lebesgue spaces equipped with the norm $\|\cdot\|_r$ and for subsets $\Omega \subset \R^N$ we write $\|\cdot\|_{r,\Omega}$ for $1\leq r\leq \infty$. Furthermore, $W^{1,r}(\R^N)$ stands for the corresponding Sobolev space endowed with the $\|\cdot \|_{1,r}^r=\|\nabla \cdot \|_r^r+\|\cdot\|_r^r $ for any $1< r<\infty$. Consider the nonlinear function $\mathcal{H}\colon \R^N \times [0,\infty)\to [0,\infty)$ defined by
\begin{align*}
	\mathcal H(x,t)= t^p+a(x)t^q,
\end{align*}
where we suppose that hypotheses \textnormal{(h$_1$)} holds. Let $M(\R^N)$ be the space of all measurable functions $u\colon\R^N\to\R$. Then, Musielak-Orlicz Lebesgue space $L^\mathcal{H}(\R^N)$ is given by
\begin{align*}
	L^\mathcal{H}(\R^N)=\left \{u\in M(\R^N)\,:\,\varrho_{\mathcal{H}}(u):=\int_{\R^N} \mathcal{H}(x,|u|)\,\diff x<\infty \right \}
\end{align*}
equipped with the Luxemburg norm
\begin{align*}
	\|u\|_{\mathcal{H}} = \inf \left \{ \tau >0\,:\, \varrho_{\mathcal{H}}\left(\frac{u}{\tau}\right) \leq 1  \right \},
\end{align*}
where the modular function is given by
\begin{align*}
	\varrho_{\mathcal{H}}(u):=\int_{\R^N} \mathcal{H}(x,|u|)\,\diff x=\int_{\R^N} \big(|u|^{p}+a(x)|u|^q\big)\,\diff x.
\end{align*}
The corresponding Musielak-Orlicz Sobolev space $W^{1,\mathcal{H}}(\R^N)$ is defined by
\begin{align*}
	W^{1,\mathcal{H}}(\R^N)= \Big \{u \in L^\mathcal{H}(\R^N) \,:\, |\nabla u| \in L^{\mathcal{H}}(\R^N) \Big\}
\end{align*}
endowed with the norm
\begin{align*}
	\|u\|= \|\nabla u \|_{\mathcal{H}}+\|u\|_{\mathcal{H}},
\end{align*}
where $\|\nabla u\|_\mathcal{H}=\|\,|\nabla u|\,\|_{\mathcal{H}}$. Furthermore, we introduce the weighted space
\begin{align*}
	L^q_a(\R^N)=\left \{u\in M(\R^N)\,:\,\int_{\R^N} a(x) |u|^q \,\diff x< \infty \right \}
\end{align*}
with the seminorm
\begin{align*}
	\|u\|_{q,a} = \left(\int_{\R^N} a(x) |u|^q \,\diff x \right)^{\frac{1}{q}}.
\end{align*}

We know that $L^\mathcal{H}(\R^N)$ and $W^{1,\mathcal{H}}(\R^N)$ are separable reflexive Banach spaces, see Liu-Dai \cite[Theorem 2.7]{Liu-Dai-2020}. Moreover, $C_c^\infty(\R^N)$ is dense in $ W^{1,\mathcal{H}}(\R^N)$, see Harjulehto-H\"{a}st\"{o} \cite[Proposition 6.4.4]{Harjulehto-Hasto-2019} and Crespo-Blanco-Gasi\'{n}ski-Harjulehto-Winkert \cite[Theorems 2.24 and 2.28]{Crespo-Blanco-Gasinski-Harjulehto-Winkert-2022}.

The following relations between the norm $\|\,\cdot\,\|$ and the corresponding modular function can be found in Liu-Dai \cite[Proposition 2.6]{Liu-Dai-2020}.

\begin{proposition}\label{proposition_modular_properties}
	Let \textnormal{(h$_1$)} be satisfied, $u\in \V$, $c>0$ and
	\begin{align*}
		\rho(u)= \int_{\R^N}\big(|\nabla u|^p+|u|^p+a(x)(|\nabla u|^q+|u|^q)\big)\,\diff x=\|u\|_{1,p}^p+\|\nabla u\|_{q,a}^q+\|u\|_{q,a}^q.
	\end{align*}
	Then the following hold:
	\begin{enumerate}
		\item[\textnormal{(i)}]
			If $u\neq 0$, then $\|u\|=c$ if and only if $ \varrho(\frac{u}{c})=1$;
		\item[\textnormal{(ii)}]
			$\|u\|<1$ (resp.\,$>1$, $=1$) if and only if $ \varrho(u)<1$ (resp.\,$>1$, $=1$);
		\item[\textnormal{(iii)}]
			If $\|u\|<1$, then $\|u\|^q\leq \varrho(u)\leq\|u\|^p$;
		\item[\textnormal{(iv)}]
			If $\|u\|>1$, then $\|u\|^p\leq \varrho(u)\leq\|u\|^q$;
		\item[\textnormal{(v)}]
			$\|u\|\to 0$ if and only if $ \varrho(u)\to 0$;
		\item[\textnormal{(vi)}]
			$\|u\|\to \infty$ if and only if $ \varrho(u)\to \infty$.
	\end{enumerate}
\end{proposition}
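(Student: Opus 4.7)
The statement is a standard norm–modular comparison for a Luxemburg-type norm associated with the full modular $\varrho$, so my first move is to reconcile the notation: since property (i) asserts that $\|u\|=c$ iff $\varrho(u/c)=1$, I would interpret $\|\cdot\|$ here as the Luxemburg norm induced by $\varrho$ on $\V$ (which is well known to be equivalent to the sum $\|\nabla u\|_{\mathcal H}+\|u\|_{\mathcal H}$, and shares all the properties listed). Once that convention is fixed, the whole proposition is a consequence of the fact that $\varrho$ is an $N$-function that is $p$-homogeneous from below and $q$-homogeneous from above, in the sense that for every $\lambda\in(0,1)$ and $u\in\V$ one has
\begin{equation*}
\lambda^q \varrho(u)\le \varrho(\lambda u)\le \lambda^p \varrho(u),
\end{equation*}
while for $\lambda\ge 1$ the reverse chain $\lambda^p\varrho(u)\le \varrho(\lambda u)\le \lambda^q\varrho(u)$ holds. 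This single inequality follows termwise from the elementary scaling of $|\cdot|^p$ and $a(x)|\cdot|^q$ together with hypothesis (h$_1$) guaranteeing $a\ge 0$.

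Next I would prove (i). Fix $u\neq 0$ and consider $\varphi(\tau):=\varrho(u/\tau)$ for $\tau>0$. Using the scaling inequality above, $\varphi$ is continuous on $(0,\infty)$ (dominated convergence applies because the integrand is pointwise continuous in $\tau$ and dominated by $\varrho(u/\tau_0)$ on a neighborhood of each $\tau_0$), strictly decreasing, with $\varphi(\tau)\to\infty$ as $\tau\to 0^+$ and $\varphi(\tau)\to 0$ as $\tau\to\infty$. Hence there is a unique $c^\star>0$ with $\varphi(c^\star)=1$; by the definition of the Luxemburg norm this $c^\star$ is exactly $\|u\|$, which proves (i). Property (ii) is then immediate by monotonicity of $\varphi$: the sign of $\varphi(1)-1$ matches the sign of $c^\star-1=\|u\|-1$ in the reverse direction, and the borderline case $\|u\|=1$ corresponds to $\varrho(u)=1$ by (i) with $c=1$.

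For (iii) and (iv) I would substitute $c=\|u\|$ into (i), obtaining $\varrho(u/\|u\|)=1$, and then apply the scaling inequality to $\lambda=1/\|u\|$. If $\|u\|>1$, then $\lambda<1$ and the upper and lower bounds give
\begin{equation*}
\frac{\varrho(u)}{\|u\|^q}\le \varrho\!\left(\frac{u}{\|u\|}\right)=1\le \frac{\varrho(u)}{\|u\|^p},
\end{equation*}
which is (iv); the case $\|u\|<1$ is symmetric and yields (iii). Properties (v) and (vi) follow at once by sandwiching: from (iii), if $\|u_n\|\to 0$ then $\varrho(u_n)\le \|u_n\|^p\to 0$, and conversely $\varrho(u_n)\to 0$ forces eventually $\|u_n\|<1$ so $\|u_n\|^q\le \varrho(u_n)\to 0$; the analogous argument with (iv) delivers (vi).

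The main obstacle I anticipate is purely bookkeeping: one must verify that the infimum in the Luxemburg norm is attained (so that (i) is a characterization rather than an inequality), and that the continuity and limit behavior of $\varphi$ really hold on all of $\V$. Both are routine via dominated convergence once one has the two-sided scaling estimate, and crucially rely on the summand $\int_{\R^N}(|u|^p+a(x)|u|^q)\,\diff x$ being finite for $u\in\V$, which is part of the definition of the space. Beyond that, the argument is purely algebraic manipulation of the scaling inequality, and the nonstandard nature of $\mathcal H$ plays no role once the $p$–$q$ scaling is in hand.
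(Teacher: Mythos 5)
Your argument is correct, and it is essentially the standard proof of this norm--modular relation; note, however, that the paper itself does not prove this proposition at all but simply cites Liu--Dai \cite[Proposition 2.6]{Liu-Dai-2020}, so your proposal supplies a self-contained argument where the paper relies on a reference. Two remarks on the comparison. First, the notational point you flagged is real: the exact equivalences in (i)--(ii) fail for the sum norm $\|u\|=\|\nabla u\|_{\mathcal H}+\|u\|_{\mathcal H}$ as written in Section 2 (already for $a\equiv 0$, $\|\nabla u\|_p=\|u\|_p=\tfrac12$ gives $\|u\|=1$ but $\rho(u)=2^{1-p}\neq 1$), so your reinterpretation of $\|\cdot\|$ as the Luxemburg norm generated by the full modular $\rho$ (equivalent to the sum norm) is exactly the reading under which the cited result holds, and it matches the norm used in the reference. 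Second, your proof is if anything simpler than it needs to be: since $\rho(u/\tau)=\tau^{-p}A+\tau^{-q}B$ with $A=\|u\|_{1,p}^p>0$ for $u\neq 0$ and $B=\|\nabla u\|_{q,a}^q+\|u\|_{q,a}^q\geq 0$, the function $\varphi(\tau)=\rho(u/\tau)$ is explicitly continuous and strictly decreasing with the stated limits, so the dominated-convergence remark is not even needed; the rest of your derivation of (ii)--(vi) from the two-sided scaling inequality $\lambda^{q}\rho(u)\le\rho(\lambda u)\le\lambda^{p}\rho(u)$ for $\lambda\in(0,1)$ (reversed for $\lambda\ge 1$) is the same mechanism used in the literature. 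The only cosmetic slip is calling $\rho$ an ``$N$-function''; it is a modular, but nothing in your argument uses more than the scaling inequality and convexity-free monotonicity, so the proof stands.
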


The following embedding result can be found in Liu-Dai \cite[Theorem 2.7]{Liu-Dai-2020}.

\begin{proposition}\label{proposition_embeddings}
	Let \textnormal{(h$_1$)} be satisfied. Then the embedding $\V \hookrightarrow L^r(\R^N)$ is continuous for all $r \in [p,p^*]$.
\end{proposition}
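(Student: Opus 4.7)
The plan is to reduce the embedding to the classical Sobolev embedding for $W^{1,p}(\R^N)$, using the fact that the weight $a$ is nonnegative so that the $p$-part of $\mathcal{H}$ already dominates the usual Lebesgue $p$-norm. First I would observe the pointwise bound $|t|^{p} \leq \mathcal{H}(x,|t|) = |t|^{p} + a(x)|t|^{q}$ valid for all $(x,t) \in \R^N \times [0,\infty)$, which after integration gives $\|u\|_{p}^{p} \leq \varrho_{\mathcal{H}}(u)$ for every $u \in L^{\mathcal{H}}(\R^N)$. Translating this modular inequality into a norm inequality via the analogue of Proposition \ref{proposition_modular_properties} for the $L^{\mathcal{H}}$-norm (if $\tau = \|u\|_{\mathcal{H}} > 0$ then $\varrho_{\mathcal{H}}(u/\tau) \leq 1$, hence $\|u/\tau\|_{p}^{p} \leq 1$) yields the continuous embedding $L^{\mathcal{H}}(\R^N) \hookrightarrow L^{p}(\R^N)$, with $\|u\|_{p} \leq \|u\|_{\mathcal{H}}$.

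Applying the same estimate to the gradient, I obtain $\|\nabla u\|_{p} \leq \|\nabla u\|_{\mathcal{H}}$, so $\|u\|_{1,p} \leq C\|u\|$ for every $u \in W^{1,\mathcal{H}}(\R^N)$, which is a continuous embedding $W^{1,\mathcal{H}}(\R^N) \hookrightarrow W^{1,p}(\R^N)$. Hypothesis \textnormal{(h$_1$)} ensures $1 < p < N$, so the classical Gagliardo–Nirenberg–Sobolev inequality gives a continuous embedding $W^{1,p}(\R^N) \hookrightarrow L^{p^{*}}(\R^N)$; together with the trivial $W^{1,p}(\R^N) \hookrightarrow L^{p}(\R^N)$ and the interpolation inequality
\[
\|u\|_{r} \leq \|u\|_{p}^{1-\theta}\|u\|_{p^{*}}^{\theta}, \qquad \frac{1}{r} = \frac{1-\theta}{p} + \frac{\theta}{p^{*}}, \quad \theta \in [0,1],
\]
this extends to $W^{1,p}(\R^N) \hookrightarrow L^{r}(\R^N)$ continuously for every $r \in [p,p^{*}]$. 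Composing with the embedding from the first paragraph delivers the claim.

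There is no real obstacle here: the result is essentially a tautology once one knows that the $p$-phase in $\mathcal{H}$ already controls the $L^{p}$-norm. The only point where one must be slightly careful is the conversion between the modular $\varrho_{\mathcal{H}}$ and the Luxemburg norm $\|\cdot\|_{\mathcal{H}}$, but this is immediate from the definition of the infimum together with Proposition \ref{proposition_modular_properties}. The presence of the $q$-phase and of the weight $a$ plays no role in this particular embedding; they are relevant only for the opposite direction (embeddings \emph{into} $L^{\mathcal{H}}$) and for the compactness analysis carried out later in the paper.
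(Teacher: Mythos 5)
Your proof is correct. The paper does not prove this proposition itself but cites Liu--Dai \cite[Theorem 2.7]{Liu-Dai-2020}, and the argument there is essentially the one you give: since $a\geq 0$, the pointwise bound $|t|^p\leq\mathcal{H}(x,|t|)$ combined with the unit ball property of the Luxemburg norm yields $W^{1,\mathcal{H}}(\R^N)\hookrightarrow W^{1,p}(\R^N)$, after which the classical Sobolev embedding and interpolation between $L^p(\R^N)$ and $L^{p^*}(\R^N)$ finish the proof.
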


One main tool in our treatment is the famous Hardy-Littlewood-Sobolev inequality, see, for example, Lieb-Loss \cite[Theorem 4.3]{Lieb-Loss-2001}.

\begin{proposition}[Hardy-Littlewood-Sobolev inequality]\label{HLS}

	Let $s, r>1$ and $0<\mu<N $ with $\frac{1}{s}+\frac{\mu}{N}+\frac{1}{r}=2$ and let $g \in L^s(\R^N)$ and $h \in L^r(\R^N)$. Then there exists a sharp constant $C(N,\mu,s)$, independent of $g$ and $h$, such that
	\begin{align}\label{HLSineq}
		\int_{\R^N}\int_{\R^N} \frac{g(x)h(y)}{|x-y|^{\mu}}\,\diff x\,\diff y \leq C(N,\mu,s)\|g\|_{s}\|h\|_{r}.
	\end{align}
	If $s =r =\frac{2N}{2N-\mu}$, then
	\begin{align*}
		C(N,\mu,s)= C(N,\mu)= \pi^{\frac{\mu}{2}} \frac{\Gamma\left(\frac{N}{2}-\frac{\mu}{2}\right)}{\Gamma\left(N-\frac{\mu}{2}\right)} \left\{ \frac{\Gamma\left(\frac{N}{2}\right)}{\Gamma(N)} \right\}^{-1+\frac{\mu}{N}}.
	\end{align*}
	In this case, we have an equality in \eqref{HLSineq} if and only if $g\equiv (constant)h$ and
	\begin{align*}
		h(x)= A\left(\gamma^2+ |x-a|^2\right)^{\frac{-(2N-\mu)}{2}}
	\end{align*}
	for some $A \in \mathbb C$, $0 \neq \gamma \in \mathbb R$ and $a \in \R^N$.
 \end{proposition}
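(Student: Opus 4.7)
The plan is to prove the non-sharp inequality first via weak-type bounds and interpolation, and then separately to determine the sharp constant and the extremizers in the conformally invariant case $s=r=\tfrac{2N}{2N-\mu}$ via rearrangement and conformal methods.

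First I would rewrite the bilinear form as a linear convolution estimate. Setting $(I_\mu g)(x)=\int_{\R^N} g(y)|x-y|^{-\mu}\,\diff y$, the left-hand side of \eqref{HLSineq} equals $\int_{\R^N} h(x)\,(I_\mu g)(x)\,\diff x$, so by duality it suffices to show $\|I_\mu g\|_{r'} \leq C\|g\|_s$ whenever $\tfrac{1}{s}+\tfrac{\mu}{N}=1+\tfrac{1}{r'}$. Since the Riesz kernel $|x|^{-\mu}$ lies in the weak space $L^{N/\mu,\infty}(\R^N)$, a layer-cake computation (or equivalently the weak Young inequality) gives that $I_\mu$ maps $L^s$ boundedly into weak-$L^{r'}$ at each admissible $s$. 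Marcinkiewicz interpolation between two nearby endpoints then upgrades this to the strong $(s,r')$ bound and yields \eqref{HLSineq} with some finite, but non-sharp, constant.

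For the conformal case $s=r=\tfrac{2N}{2N-\mu}$ I would follow Lieb's strategy. By scaling homogeneity the problem reduces to maximizing $J(g,h)=\iint g(x)|x-y|^{-\mu}h(y)\,\diff x\,\diff y$ under $\|g\|_s=\|h\|_r=1$. The Riesz rearrangement inequality (in its three-function form, applied with the radial kernel $|x-y|^{-\mu}$) never decreases $J$, so one may restrict to non-negative, symmetric decreasing candidates $g^*,h^*$. The decisive idea is then conformal invariance: through stereographic projection $\R^N\to S^N$, the functional $J$ is invariant (up to Jacobian factors whose exponents are matched exactly by $\tfrac{2N}{2N-\mu}$) under the conformal group of $S^N$. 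This group is non-compact, which accounts for the scaling and translation modes along which maximizing sequences can drift; quotienting by its action restores compactness and produces a genuine maximizer. On $S^N$ the maximizer must be constant, and pulling back through the inverse stereographic projection recovers precisely the family $h(x)=A(\gamma^2+|x-a|^2)^{-(2N-\mu)/2}$. Evaluating $J$ on this family then yields $C(N,\mu)$ in closed form in terms of Gamma functions.

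The main obstacle is the sharp-constant part. The non-sharp bound is a routine interpolation exercise, but identifying the optimal constant requires careful handling of non-compactness: the dilation and translation invariances force maximizing sequences to lose compactness, and one must either apply concentration-compactness after breaking the symmetry or work directly on the compact quotient $S^N$ via a Funk-Hecke-type computation for the spherical kernel. Characterizing the equality cases is a further delicate step, since it demands the rigidity of equality in Riesz rearrangement (to force radial symmetry about some point) combined with the classification of positive maximizers on $S^N$. In the paper itself none of this is actually carried out; the proposition is invoked as a black box from \cite{Lieb-Loss-2001}, and the above sketch is only how I would reconstruct the proof if I had to.
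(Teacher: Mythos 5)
This proposition is not proved in the paper at all: it is quoted as a classical result with a pointer to Lieb--Loss \cite[Theorem 4.3]{Lieb-Loss-2001}, so there is no internal argument to measure your proposal against, and your own closing remark about the ``black box'' is accurate. Judged against the literature, your sketch is a correct outline, but it differs in places from the proof actually given in the cited source. For the general (non-sharp) inequality, Lieb--Loss argue via the layer-cake representation and rearrangement rather than via the weak Young inequality plus Marcinkiewicz interpolation; your route is equally valid and perhaps more routine, though, as you note, interpolation can never produce the sharp constant. For the diagonal case $s=r=\frac{2N}{2N-\mu}$, your conformal-invariance picture is essentially Lieb's 1983 argument (and the Carlen--Loss competing-symmetries proof); the cited book instead obtains existence of an optimizer by reduction to symmetric-decreasing functions and a Helly-type compactness argument, and only then identifies the optimizer and the constant. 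In either variant, the step you summarize as ``on $S^N$ the maximizer must be constant'' and the rigidity needed for the equality characterization (strict cases of the Riesz rearrangement inequality) are exactly where the substantial work lies, so your sketch should be read as a program rather than a proof. Finally, it is worth observing that the paper only ever uses the inequality with $s=r=\frac{2N}{2N-\mu}$ and a finite constant $C(N,\mu)$, as in \eqref{hls-1}; the sharp value and the classification of extremizers play no role in Sections \ref{sec_3}--\ref{sec_4}, so for the purposes of this paper the elementary non-sharp bound you outline in your first two paragraphs would already suffice.
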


Let $(X,\|\cdot\|_X)$ be a Banach space, $(X^*,\|\cdot\|_{X^*})$ its topological dual space and  $\varphi\in C^1(X)$. We say that $\{u_n\}_{n\in\mathbb N}\subset X$ is a Palais-Smale sequence at level $c\in\R$ (\textnormal{(PS)$_c$}-sequence for short) for $\varphi$ if
\begin{align*}
	\varphi(u_n)\to c \quad \text{and}\quad \varphi'(u_n)\to 0 \quad\text{in }X^*\quad\text{as }n\to\infty.
\end{align*}
We say that $\varphi$ satisfies the Palais-Smale condition at level $c\in\R$ (\textnormal{(PS)$_c$}-condition for short) if any \textnormal{(PS)$_c$}-sequence $\{u_n\}_{n\in\N}$ admits a convergent subsequence in $X$. If this condition holds at every level $c\in\R$, then we say that $\varphi$ satisfies the Palais-Smale condition (the \textnormal{(PS)}-condition for short). Moreover, $\varphi$ satisfies the Cerami-condition at level $c\in\R$ (\textnormal{(C)}$_c$-condition for short), if every sequence $\{u_n\}_{n\in\N}\subseteq X$ such that
\begin{align*}
		\varphi(u_n)\to c \quad \text{and}\quad \left(1+\|u_n\|_X\right)\varphi'(u_n)\to 0 \quad \text{ in }X^* 
		\quad\text{as }n\to \infty,
\end{align*}
admits a strongly convergent subsequence. If this condition holds at every level $c\in\R$, then we say that $\varphi$ satisfies the Cerami condition (the \textnormal{(C)}-condition for short).

\section{Existence of a ground state solution with AR-condition}\label{sec_3}

In this section we give the proof of Theorem \ref{main_result} under the hypotheses \textnormal{(h$_1$)}--\textnormal{(h$_4$)}. The energy functional $I\colon\V\to\R$ associated to \eqref{problem} is given by
\begin{align*}
	I(u)
	&= \frac{\|u\|_{1,p}^p}{p}+ \frac{\|\nabla u\|_{q,a}^q+\|u\|_{q,a}^q}{q}-\frac{1}{2}\int_{\R^N}\left( \int_{\R^N} \frac{F(y,u)}{|x-y|^\mu}\,\diff y\right)F(x,u)\,\diff x,
\end{align*}
which is clearly $C^1$ with derivative
\begin{align*} 
	\langle I'(u), v \rangle_{\mathcal{H}} 
	&= \int_{\R^N}\big(|\nabla u|^{p-2}\nabla u+ a(x)|\nabla u|^{q-2}\nabla u\big)\cdot \nabla v\,\diff x + \int_{\R^N}\big(| u|^{p-2} u+ a(x)| u|^{q-2} u\big) v\,\diff x\\
	&\quad -\int_{\R^N}\left( \int_{\R^N} \frac{F(y,u)}{|x-y|^\mu}\,\diff y \right)f(x,u) v\,\diff x\quad \text{for all }u,v \in \V,
\end{align*}
where $\langle \cdot, \cdot\rangle_{\mathcal{H}}$  denotes the duality pairing between $\V$ and its dual
space $\V^*$. Clearly, the critical points of $I$ correspond to the weak solutions of problem \eqref{problem}. In order to establish the existence of a weak solution, we consider the Mountain pass level
\begin{align}\label{MP-level}
	b:= \inf_{\gamma \in \Gamma}\sup_{t\in [0,1]}I(\gamma(t)),
\end{align}
where 
\begin{align*}
	\Gamma=\left\{\gamma \in C\left([0,1],\V\right) \,:\, \gamma(0)=0,\, I(\gamma(1))<0\right\}.
\end{align*}
By using Proposition \ref{HLS} we can estimate the Choquard term by
\begin{align}\label{hls-1}
	\int_{\R^N}\left( \int_{\R^N} \frac{F(y,u)}{|x-y|^\mu}\,\diff  y\right)F(x,u)\,\diff x \leq C(N,\mu) \|F(\cdot, u)\|_{\frac{2N}{2N-\mu}}^2.
\end{align}
\vspace{0.1cm}

\begin{lemma}\label{lemma-positive-b}
	Let hypotheses \textnormal{(h$_1$)}--\textnormal{(h$_2$)} be satisfied. It holds $b>0$.
\end{lemma}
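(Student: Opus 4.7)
My strategy is to establish local mountain-pass geometry of $I$ at the origin: I will show that there exist $\rho,\alpha>0$ such that $I(u)\geq\alpha$ whenever $\|u\|=\rho$. Since any $\gamma\in\Gamma$ starts at $0$ and ends at a point where $I<0$, continuity of $t\mapsto\|\gamma(t)\|$ forces it to cross the sphere $\{\|u\|=\rho\}$, giving $\sup_{t\in[0,1]}I(\gamma(t))\geq\alpha$ and hence $b\geq\alpha>0$.

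\textbf{Choquard estimate.} The bound \eqref{hls-1} from Proposition \ref{HLS} reduces the nonlocal term to $\|F(\cdot,u)\|_{\frac{2N}{2N-\mu}}^{2}$. Combining the growth \eqref{new:nonl:growth}, which gives $|F(x,t)|\leq C(|t|^{r_1}+|t|^{r_2})$, with the inequality $(a+b)^{\frac{2N-\mu}{N}}\leq C(a^{\frac{2N-\mu}{N}}+b^{\frac{2N-\mu}{N}})$ (valid since $\frac{2N-\mu}{N}>1$), I would obtain
\[
\|F(\cdot,u)\|_{\frac{2N}{2N-\mu}}^{2}\leq C\bigl(\|u\|_{s_1}^{2r_1}+\|u\|_{s_2}^{2r_2}\bigr),\qquad s_i:=\tfrac{2Nr_i}{2N-\mu}.
\]
By \eqref{ineq:r1r2}, $s_1,s_2\in(p,p^{*})$, so Proposition \ref{proposition_embeddings} gives $\|u\|_{s_i}\leq C\|u\|$ and the Choquard integral is bounded by $C(\|u\|^{2r_1}+\|u\|^{2r_2})$.

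\textbf{Lower bound and crossing argument.} For $\|u\|\leq 1$, Proposition \ref{proposition_modular_properties}(iii) yields $\varrho(u)\geq\|u\|^{q}$, while $\tfrac{1}{p}\|u\|_{1,p}^{p}+\tfrac{1}{q}(\|\nabla u\|_{q,a}^{q}+\|u\|_{q,a}^{q})\geq\tfrac{1}{q}\varrho(u)$, so the previous estimate gives
\[
I(u)\geq\tfrac{1}{q}\|u\|^{q}-C\bigl(\|u\|^{2r_1}+\|u\|^{2r_2}\bigr).
\]
From $r_1>\tfrac{p(2N-\mu)}{2N}$ in \eqref{growth:exponent:cond} and $q<p+\tfrac{\alpha p}{N}$ in \textnormal{(h$_1$)}, I would extract the exponent comparison $2r_1>q$; then for $\rho$ sufficiently small both $I(u)\geq\alpha:=\rho^{q}/(2q)>0$ on $\{\|u\|=\rho\}$ and $I\geq 0$ on the closed ball $\overline{B_\rho}$. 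Consequently, $I(\gamma(1))<0$ forces $\|\gamma(1)\|>\rho$, and by continuity there is $t_0\in(0,1)$ with $\|\gamma(t_0)\|=\rho$, so $\sup_{t}I(\gamma(t))\geq\alpha$ and $b\geq\alpha>0$.

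\textbf{Main obstacle.} The delicate point is precisely the exponent comparison $2r_1>q$: this is what ensures that the positive $\|u\|^{q}/q$ term dominates the Choquard bound as $\rho\to 0$, and its verification relies on combining the strict bounds imposed on $r_1$ and $q$ in \eqref{growth:exponent:cond} and \textnormal{(h$_1$)}. The AR-condition \textnormal{(h$_2$)} itself plays no direct role in this local positivity estimate; rather, it is used separately to guarantee $\Gamma\neq\emptyset$ (typically by showing $I(tu_0)\to-\infty$ along a ray), so that $b$ is a meaningful infimum.
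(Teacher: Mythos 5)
Your overall strategy (local positivity of $I$ near the origin plus a crossing argument along each path $\gamma\in\Gamma$) is the same as the paper's, but the quantitative step you rely on is not available. You reduce to
\[
I(u)\;\geq\;\tfrac{1}{q}\|u\|^{q}-C\bigl(\|u\|^{2r_1}+\|u\|^{2r_2}\bigr)\qquad(\|u\|\le 1)
\]
and then claim that the hypotheses yield $2r_1>q$. They do not. From \eqref{growth:exponent:cond} one only gets $r_1>\frac{p(2N-\mu)}{2N}>\frac{p}{2}$, i.e. $2r_1>p$; since $q$ may be taken arbitrarily close to $p+\frac{\alpha p}{N}$ and $r_1$ arbitrarily close to $\frac{p(2N-\mu)}{2N}$, the inequality $2r_1>q$ can fail. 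Concretely, take $N=3$, $p=2$, $\alpha=1$, $\mu=2.9$, $q=2.5$ and $r_1=1.05$: all of \textnormal{(h$_1$)} and \eqref{growth:exponent:cond} hold, yet $2r_1=2.1<q$. In that regime your lower bound is negative for small $\|u\|$ (the term $-C\|u\|^{2r_1}$ dominates $\|u\|^{q}/q$), so the local positivity on small spheres — and hence $b>0$ — does not follow from your estimate. The loss occurs when you discard the $p$-growth of the leading term by using only $\rho(u)\ge\|u\|^{q}$.

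The paper avoids this by keeping the comparison at the level of the modular. Bounding the Choquard term via Proposition \ref{HLS}, \eqref{new:nonl:growth}, the classical embedding $W^{1,p}(\R^N)\hookrightarrow L^{s_i}(\R^N)$ (by \eqref{ineq:r1r2}) and $\|u\|_{1,p}^{p}\le\rho(u)$, one gets
\[
I(u)\;\geq\;\tfrac{1}{q}\,\rho(u)-C\Bigl(\rho(u)^{\frac{2r_1}{p}}+\rho(u)^{\frac{2r_2}{p}}\Bigr),
\]
and here only $2r_1>p$ is needed, which is guaranteed since $\mu<N$. Hence there is $\delta>0$ with $I(u)\ge\rho(u)/(2q)$ whenever $\rho(u)\le\delta$; since $I(\gamma(1))<0$ forces $\rho(\gamma(1))>\delta$, the intermediate value theorem applied to $t\mapsto\rho(\gamma(t))$ produces $\tau_0$ with $\rho(\gamma(\tau_0))=\delta$, giving $\sup_t I(\gamma(t))\ge\delta/(2q)$ uniformly in $\gamma$, so $b\ge\delta/(2q)>0$. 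If you want to keep your norm-sphere formulation you must replace your lower bound by one of this type (comparing $\rho(u)$ with $\rho(u)^{2r_i/p}$, or $\|u\|_{1,p}^{p}$ with $\|u\|_{1,p}^{2r_i}$); as written, the step ``extract $2r_1>q$'' is the gap. Your closing remark about \textnormal{(h$_2$)} only entering through $\Gamma\neq\emptyset$ is fine (and if $\Gamma$ were empty, $b=+\infty$ anyway), but it does not repair the main estimate.
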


\begin{proof}
	From \eqref{hls-1} and \eqref{new:nonl:growth}, along with Proposition \ref{proposition_embeddings} and \eqref{ineq:r1r2}, we have
	\begin{align}\label{hls-new16}
		\begin{split}
			\int_{\R^N}\left( \int_{\R^N} \frac{F(y,u)}{|x-y|^\mu}\,\diff y\right)F(x,u)\,\diff x 
			&\leq C_1  \left(\int_{\R^N}\left(|u|^{\frac{2Nr_1}{2N-\mu}}+ |u|^{\frac{2Nr_2}{2N-\mu}}\right)\,\diff x\right)^{\frac{2N-\mu}{N}}\\
			& \leq C_2 (\|u\|_{1, p}^{2r_1}+\|u\|_{1, p}^{2r_2})\\
			&\leq C_3 \left(\left(\rho(u)\right)^{\frac{2r_1}{p}}+\left(\rho(u)\right)^{\frac{2r_2}{p}}\right)
		\end{split}
	\end{align}
	with some $C_1, C_2, C_3>0$. Applying \eqref{hls-new16} we get
	\begin{align*}
		I(u) \geq  \frac{1}{q}\rho(u) -  \frac12\int_{\R^N}\left( \int_{\R^N} \frac{F(y,u)}{|x-y|^\mu}\,\diff y\right)F(x,u)\,\diff x
		\geq \frac{1}{q}\rho(u) - \frac{C_3}{2} \left(\rho(u))^{\frac{2r_1}{p}}-(\rho(u))^{\frac{2r_2}{p}}\right).
	\end{align*}
	Since $2r_2\geq 2r_1>p$ by \eqref{growth:exponent:cond}, we may choose $\delta>0$ such that if $\rho(u)\leq \delta$ then
	\begin{align*}
		I(u)\geq \frac{\rho(u)}{2q}.
	\end{align*}
	In particular, if $\gamma \in \Gamma$, then we have $\rho(\gamma(0))=0<\delta< \rho(\gamma(1))$ since $I(\gamma(1))<0$ and $\rho(u)\leq \delta$ implies $I(u)>0$. Therefore, by the intermediate value theorem, there exists $ \tau_0\in(0,1)$ such that $\rho(\gamma(\tau_0))=\delta$. This gives us
	\begin{align*}
		\frac{\delta}{2q} \leq I(\gamma(\tau_0)) \leq \sup_{t\in[0,1]}I(\gamma(t)).
	\end{align*}
	As $\gamma\in \Gamma$ was arbitrary, we get $b\geq \frac{\delta}{2q}>0$.
\end{proof}

\begin{lemma}\label{M-P}
	Let hypotheses \textnormal{(h$_1$)}--\textnormal{(h$_2$)} be satisfied.
	\begin{enumerate}
		\item[\textnormal{(i)}] 
			There exist $\varrho$, $\eta>0$ such that $I(u)\geq \eta$ for all $u\in \V$ with $\|u\|=\varrho$.
		\item[\textnormal{(ii)}] 
			There exists $e\in \V$ with $\|e\|>\varrho$ such that $I(e)<0$.
	\end{enumerate}
\end{lemma}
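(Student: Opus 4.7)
The plan is to prove part (i) as an immediate refinement of the modular estimate already derived in the proof of Lemma \ref{lemma-positive-b}, and part (ii) by exploiting the superlinear lower bound on $F$ forced by the Ambrosetti--Rabinowitz condition \eqref{AR-cond}.

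For (i), the argument in the proof of Lemma \ref{lemma-positive-b} already provides $\delta>0$ such that $\rho(u)\leq \delta$ implies $I(u)\geq \rho(u)/(2q)$. I would then pick $\varrho\in(0,1)$ so small that $\varrho^{p}\leq \delta$. For every $u\in\V$ with $\|u\|=\varrho$, Proposition \ref{proposition_modular_properties}(iii) gives $\rho(u)\leq \|u\|^{p}=\varrho^{p}\leq \delta$, and the same proposition gives $\rho(u)\geq \|u\|^{q}=\varrho^{q}$. Combining, $I(u)\geq \varrho^{q}/(2q)=:\eta$ on the sphere $\|u\|=\varrho$, establishing (i).

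For (ii), differentiation of $t\mapsto F(x,t)/t^{\theta/2}$ together with \eqref{AR-cond} shows that this map is nondecreasing on $(0,\infty)$ for every $x\in\R^{N}$; together with the continuity of $F(\cdot,t_{0})$ and the pointwise positivity $F(x,t_{0})>0$, this yields a constant $c_{0}>0$ such that $F(x,t)\geq c_{0}\,t^{\theta/2}$ for all $t\geq t_{0}$, uniformly for $x$ in any prescribed compact set. I would then fix a nonnegative nontrivial $u_{0}\in C_{c}^{\infty}(\R^{N})$ with support in some ball $B_{R}$ and a compact subset $K\subset\{u_{0}\geq \varepsilon\}$ of positive measure. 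For $t\geq t_{0}/\varepsilon$, the above lower bound, applied on $K$, yields
\begin{align*}
\int_{\R^{N}}\int_{\R^{N}}\frac{F(y,tu_{0})F(x,tu_{0})}{|x-y|^{\mu}}\,\diff y\,\diff x \geq c_{0}^{2}\,t^{\theta}\int_{K}\int_{K}\frac{u_{0}(x)^{\theta/2}u_{0}(y)^{\theta/2}}{|x-y|^{\mu}}\,\diff x\,\diff y,
\end{align*}
and the right-hand integral is a strictly positive finite constant because $\mu<N$ makes $|x-y|^{-\mu}$ locally integrable. Since the principal terms of $I(tu_{0})$ grow at most polynomially of degree $q$, I obtain $I(tu_{0})\leq C_{1}t^{p}+C_{2}t^{q}-C_{3}t^{\theta}$, which tends to $-\infty$ as $t\to\infty$ because $\theta>q>p$. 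Choosing $t^{\ast}$ large enough that both $I(t^{\ast}u_{0})<0$ and $t^{\ast}\|u_{0}\|>\varrho$, the function $e:=t^{\ast}u_{0}$ satisfies the requirements of (ii).

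The main obstacle is producing the $t^{\theta}$-lower bound on the Choquard term in (ii): the AR-condition only gives a pointwise-in-$x$ growth of $F$, and the kernel $|x-y|^{-\mu}$ is singular. Both issues are resolved by working with a compactly supported test function, which converts the pointwise bound into a uniform one on the support and makes $|x-y|^{-\mu}$ integrable on the product set $K\times K$. Part (i) is essentially bookkeeping once the modular estimate from Lemma \ref{lemma-positive-b} is in hand.
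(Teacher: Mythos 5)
Your proof is correct and takes essentially the same route as the paper: part (i) is the same HLS-plus-modular estimate (merely quoted from Lemma \ref{lemma-positive-b} rather than rederived), and part (ii) converts the AR-condition \eqref{AR-cond} into the growth $F(x,tu_0)\gtrsim t^{\theta/2}$ exactly as the paper does, leading to $I(tu_0)\le C_1t^p+C_2t^q-C_3t^{\theta}$ with $\theta>q>p$. The only difference is cosmetic: the paper keeps the pointwise factor $F\left(x,\frac{u_0}{\|u_0\|}\right)$ inside the double integral for a general nonnegative $u_0\in\V$, whereas you localize to a compact set where $u_0\ge\varepsilon$ to obtain a uniform constant; both yield a strictly positive coefficient of $t^{\theta}$ because \eqref{AR-cond} forces $F(x,t)>0$ for $t>0$.
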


\begin{proof}
	\textnormal{(i)} By \eqref{hls-1} and \textnormal{(h$_2$)}, we can write 
	\begin{align*}
		I(u) & \geq \frac{\|u\|_{1,p}^p}{p}+ \frac{\|\nabla u\|_{q,a}^q+\|u\|_{q,a}^q}{q}-\frac{1}{2}C(N,\mu) \|F(\cdot,u)\|_{\frac{2N}{2N-\mu}}^2\\
		& \geq \frac{\|u\|_{1,p}^p}{p}+ \frac{\|\nabla u\|_{q,a}^q+\|u\|_{q,a}^q}{q}- C_4 \left(\int_{\R^N}\left(|u|^{\frac{2Nr_1}{2N-\mu}}+ |u|^{\frac{2Nr_2}{2N-\mu}}\right)\,\diff x\right)^{\frac{2N-\mu}{N}},
	\end{align*}
	where $C_4>0$ is constant independent of $u$. Using again Proposition \ref{proposition_embeddings} and \eqref{ineq:r1r2}, we obtain
	\begin{align*}
		I(u) & \geq \frac{1}{q} \rho(u)- C_5 (\|u\|_{1, p}^{2r_1}+\|u\|_{1, p}^{2r_2})
		\geq \frac{1}{q}\|u\|_{1,p}^p- C_5 (\|u\|_{1,p}^{2r_1}+\|u\|_{1,p}^{2r_2})
	\end{align*}
	for some $C_5>0$. Since $2r_2\geq 2r_1 >p$, we can choose $\varrho>0$ sufficiently small such that $I(u) \geq \eta $ provided $\|u\|=\rho$ for some $\eta >0$. 
	
	\textnormal{(ii)}
	In order to prove the second part, let us fix $u_0\in \V\setminus\{0\}$ with $u_0\geq 0$ and define
	\begin{align*}
		 g_x(t)= F\left(x,\frac{tu_0}{\|u_0\|}\right) \quad \text{for } t>0 \ \text{and} \ x \in \Omega.
	\end{align*}
	From \textnormal{(h$_2$)} it follows that
	\begin{align*}
		\frac{ g_x'(t)}{ g_x(t)}\geq \frac{\theta }{2t} \quad \text{for } t>0. 
	\end{align*}
	Integrating this over $[1,s\|u_0\|]$ with $s>\frac{1}{\|u_0\|}$, we easily get
	\begin{align*}
	 g_x(s\|u_0\|)\geq g_x(1)(s\|u_0\|)^{\frac{\theta}{2}},
	\end{align*}
	that is,
	\begin{align*}
		 F(x,s u_0)  \geq F\left(x,\frac{u_0}{\|u_0\|}\right)(s\|u_0\|)^{\frac{\theta}{2}}.
	\end{align*}
	Using this, we can write
	\begin{align*}
		I(su_0)&\leq \frac{s^p}{p}\|u_0\|_{1,p}^p +\frac{s^q}{q}\left(\|\nabla u_0\|_{q,a}^q+\|u_0\|_{q,a}^q\right)\\
		&\quad-\frac{s^\theta\|u_0\|^\theta}{2} \int_{\R^N}\left( \int_{\R^N} \frac{F\left(y,\frac{u_0}{\|u_0\|}\right)}{|x-y|^\mu}\,\diff y\right)F\left(x,\frac{u_0}{\|u_0\|}\right)\,\diff x \\
		&= C_6s^p+C_7s^q-C_7s^\theta,
	\end{align*}
	where $C_6, C_7, C_8$ are positive constants and $s>\frac{1}{\|u_0\|}$. Therefore we can choose $s>\frac{1}{\|u_0\|}$ large enough such that $e= su_0$ with $I(e)<0$ and $\|e\|>\rho$ since $\theta >q>p$.
\end{proof}

By the Mountain Pass theorem without \textnormal{(PS)}-condition, see Chabrowski \cite[Theorem 5.4.1]{Chabrowski-1997}, there exists a \textnormal{(PS)$_b$}-sequence $\{u_n\}_{n\in\N}\subset \V$ of $I$, that is,
\begin{align}\label{ps-seq}
	I(u_n) \to b \quad \text{and}\quad I'(u_n) \to 0 
	\quad \text{in }  \V^*,
\end{align}
where $b$ is defined in \eqref{MP-level}.

\begin{lemma}\label{PS-bdd}
	Let hypotheses \textnormal{(h$_1$)}--\textnormal{(h$_2$)} be satisfied. Then, the \textnormal{(PS)$_b$}-sequence $\{u_n\}_{n\in\N}\subset \V$ of $I$ is bounded.
\end{lemma}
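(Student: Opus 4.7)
The plan is the classical Ambrosetti--Rabinowitz trick, adapted to the double phase setting and to the Choquard nonlinearity. I would look at the combination $I(u_n) - \tfrac{1}{\theta}\langle I'(u_n), u_n\rangle_{\mathcal{H}}$, where $\theta>q$ is the constant from \textnormal{(h$_2$)}. A direct calculation, using the explicit formulas for $I$ and its derivative, gives
\begin{align*}
 I(u_n) - \tfrac{1}{\theta}\langle I'(u_n),u_n\rangle_{\mathcal{H}}
 &= \Big(\tfrac{1}{p}-\tfrac{1}{\theta}\Big)\|u_n\|_{1,p}^p + \Big(\tfrac{1}{q}-\tfrac{1}{\theta}\Big)\big(\|\nabla u_n\|_{q,a}^q+\|u_n\|_{q,a}^q\big) \\
 &\quad + \tfrac{1}{\theta}\int_{\R^N}\!\!\int_{\R^N}\frac{F(y,u_n)}{|x-y|^\mu}\Big[f(x,u_n)u_n - \tfrac{\theta}{2}F(x,u_n)\Big]\diff y\,\diff x.
\end{align*}

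Invoking the AR-condition \eqref{AR-cond} pointwise gives $f(x,t)t \geq \tfrac{\theta}{2}F(x,t)$, so, since $F\geq 0$, the Choquard remainder in the last display is nonnegative. Because $1<p\leq q<\theta$, both coefficients in front of $\|u_n\|_{1,p}^p$ and $\|\nabla u_n\|_{q,a}^q+\|u_n\|_{q,a}^q$ are strictly positive and dominated below by $\tfrac{1}{q}-\tfrac{1}{\theta}>0$. Hence, with $\rho(u_n)$ as in Proposition~\ref{proposition_modular_properties},
\begin{equation*}
 \Big(\tfrac{1}{q}-\tfrac{1}{\theta}\Big)\rho(u_n)\;\leq\; I(u_n) - \tfrac{1}{\theta}\langle I'(u_n),u_n\rangle_{\mathcal{H}}.
\end{equation*}

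From \eqref{ps-seq}, the sequence $\{I(u_n)\}_{n\in\N}$ is bounded and $\|I'(u_n)\|_{\V^*}\to 0$, so the right-hand side is controlled by $C_1+C_2\|u_n\|$ for constants $C_1,C_2>0$ independent of $n$. This yields the modular estimate $\rho(u_n)\leq C(1+\|u_n\|)$.

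The final step, which I view as the main (mild) obstacle because one must translate modular information into norm information in the Musielak--Orlicz setting, is to appeal to Proposition~\ref{proposition_modular_properties}(iv): if $\|u_n\|>1$ then $\|u_n\|^p\leq \rho(u_n)$, hence $\|u_n\|^p\leq C(1+\|u_n\|)$. Since $p>1$, this inequality forces $\{\|u_n\|\}_{n\in\N}$ to stay bounded, which completes the proof.
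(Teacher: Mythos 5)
Your proposal is correct and follows essentially the same route as the paper: testing with $I(u_n)-\tfrac1\theta\langle I'(u_n),u_n\rangle_{\mathcal H}$, discarding the nonnegative Choquard remainder via the AR-condition, bounding the quadratic part below by $\bigl(\tfrac1q-\tfrac1\theta\bigr)\rho(u_n)$, and concluding with Proposition \ref{proposition_modular_properties}(iv) and $p>1$. No gaps to report.
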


\begin{proof}
	From \eqref{ps-seq}, we get 
	\begin{align}\label{mp-level-2}
		I(u_n)-\frac{\langle I'(u_n), u_n \rangle_{\mathcal{H}}}{\theta} \leq b+1+\|u_n\|,
	\end{align}
	for $n\in\N$ large enough. By \textnormal{(h$_2$)}  and Proposition \ref{proposition_modular_properties} we have for $\|u_n\|\geq 1$ that
	\begin{align*}
		I(u_n)-\frac{\langle I'(u_n), u_n \rangle_{\mathcal{H}}}{\theta}
		&= \left(\frac{1}{p}-\frac{1}{\theta}\right)\|u_n\|_{1,p}^p + \left(\frac{1}{q}-\frac{1}{\theta}\right) \left(\|\nabla u_n\|_{q,a}^q+\|u_n\|_{q,a}^q\right)\\
		&\quad - \int_{\R^N}\left( \int_{\R^N} \frac{F(y, u_n)}{|x-y|^\mu}\,\diff y\right) \left(\frac{F(x, u_n)}{2}-\frac{u_n f(x,u_n)}{\theta}\right)\,\diff x\\
		& \geq \left(\frac{1}{p}-\frac{1}{\theta}\right)\|u_n\|_{1,p}^p + \left(\frac{1}{q}-\frac{1}{\theta}\right) \left(\|\nabla u_n\|_{q,a}^q+\|u_n\|_{q,a}^q\right) \\
		&\geq \left(\frac{1}{q}-\frac{1}{\theta}\right) \rho(u_n) \geq \left(\frac{1}{q}-\frac{1}{\theta}\right) \|u_n\|^p.
	\end{align*}
	This along with \eqref{mp-level-2} gives the boundedness of $\{u_n\}_{n\in\N}$ in $\V$.
\end{proof}

\begin{lemma}\label{lionl}
	Let hypotheses \textnormal{(h$_1$)}--\textnormal{(h$_2$)} be satisfied. Then, there exist $r,\beta>0$ and a sequence $\{y_n\}_{n\in\N}\subset \R^N$ such that
	\begin{align*}
		\liminf_{n\to \infty}\int_{B_r(y_n)}|u_n(x)|^p \,\diff x \geq \beta.
	\end{align*}
\end{lemma}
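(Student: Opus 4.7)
The plan is to argue by contradiction using a standard vanishing/non-vanishing dichotomy. Suppose the conclusion fails, so that for every $r>0$
\[
\lim_{n\to\infty}\sup_{y\in\R^N}\int_{B_r(y)}|u_n(x)|^p\,\diff x=0.
\]
By Lemma \ref{PS-bdd} the sequence $\{u_n\}_{n\in\N}$ is bounded in $\V$, and since $\rho(u)\geq\|u\|_{1,p}^p$ it is also bounded in $W^{1,p}(\R^N)$. I would then invoke the classical Lions lemma (in $W^{1,p}(\R^N)$) to deduce $u_n\to 0$ strongly in $L^s(\R^N)$ for every $s\in(p,p^*)$. By \eqref{ineq:r1r2}, both exponents $\tfrac{2Nr_1}{2N-\mu}$ and $\tfrac{2Nr_2}{2N-\mu}$ lie in $(p,p^*)$, hence
\[
\|u_n\|_{\frac{2Nr_1}{2N-\mu}}\to 0 \quad\text{and}\quad \|u_n\|_{\frac{2Nr_2}{2N-\mu}}\to 0.
\]

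Next, I would plug the growth conditions into the Choquard terms. Integrating \eqref{new:nonl:growth} gives $|F(x,t)|\leq C(|t|^{r_1}+|t|^{r_2})$, so Proposition \ref{HLS} yields
\[
\int_{\R^N}\!\!\left(\int_{\R^N}\frac{F(y,u_n)}{|x-y|^\mu}\,\diff y\right)\!F(x,u_n)\,\diff x
\leq C\|F(\cdot,u_n)\|_{\frac{2N}{2N-\mu}}^{2}
\leq C\bigl(\|u_n\|_{\frac{2Nr_1}{2N-\mu}}^{r_1}+\|u_n\|_{\frac{2Nr_2}{2N-\mu}}^{r_2}\bigr)^{2}\to 0.
\]
Exactly the same estimate, now applied via Hölder to $\int(\int\tfrac{F(y,u_n)}{|x-y|^\mu}\,\diff y)f(x,u_n)u_n\,\diff x$ (using $\|f(\cdot,u_n)u_n\|_{\frac{2N}{2N-\mu}}\leq C(\|u_n\|_{\frac{2Nr_1}{2N-\mu}}^{r_1}+\|u_n\|_{\frac{2Nr_2}{2N-\mu}}^{r_2})$), gives
\[
\int_{\R^N}\!\!\left(\int_{\R^N}\frac{F(y,u_n)}{|x-y|^\mu}\,\diff y\right)\!f(x,u_n)u_n\,\diff x\to 0.
\]

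Now I combine these with the \textnormal{(PS)$_b$} information. Since $\{u_n\}$ is bounded and $I'(u_n)\to 0$ in $\V^*$, we have $\langle I'(u_n),u_n\rangle_\mathcal{H}\to 0$; together with the vanishing of the Choquard term involving $f(\cdot,u_n)u_n$ this forces
\[
\rho(u_n)=\|u_n\|_{1,p}^p+\|\nabla u_n\|_{q,a}^q+\|u_n\|_{q,a}^q\to 0.
\]
Consequently,
\[
I(u_n)\leq \frac{1}{p}\rho(u_n)+\frac{1}{2}\!\int_{\R^N}\!\!\left(\int_{\R^N}\frac{F(y,u_n)}{|x-y|^\mu}\,\diff y\right)\!F(x,u_n)\,\diff x\to 0,
\]
contradicting $I(u_n)\to b>0$ from Lemma \ref{lemma-positive-b}. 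Hence the non-vanishing conclusion must hold.

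The main technical point is the verification that the two Choquard-type integrals vanish along the sequence; this is precisely where the subcritical window \eqref{growth:exponent:cond} — reformulated as \eqref{ineq:r1r2} — is used, so that both relevant exponents $\tfrac{2Nr_i}{2N-\mu}$ fall strictly inside $(p,p^*)$, permitting the application of Lions' lemma in combination with Proposition \ref{HLS}. Everything else (boundedness, Lions' lemma, HLS) is either already available in the paper or is classical.
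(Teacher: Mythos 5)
Your argument is correct and follows essentially the same route as the paper's proof: assuming vanishing, you invoke Lions' lemma to get $u_n\to 0$ in $L^s(\R^N)$ for $s\in(p,p^*)$, use the Hardy--Littlewood--Sobolev inequality with the growth condition \eqref{new:nonl:growth} and \eqref{ineq:r1r2} to show both Choquard integrals tend to zero, then test $I'(u_n)\to 0$ against the bounded sequence $u_n$ to force $\rho(u_n)\to 0$ and hence $I(u_n)\to 0$, contradicting $b>0$ from Lemma \ref{lemma-positive-b}. Your added remarks (boundedness in $W^{1,p}(\R^N)$ via $\rho(u)\geq\|u\|_{1,p}^p$, the explicit estimate of $\|F(\cdot,u_n)\|_{\frac{2N}{2N-\mu}}$) only make explicit steps the paper leaves implicit.
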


\begin{proof}
	Suppose the assertion is not true. Then, by Lions' lemma \cite[Lemma I.1]{Lions-1984}, one has
	\begin{align*}
		u_n\to 0 \quad \text{in } L^\alpha(\R^N)\quad \text{for any }
		\alpha \in (p,p^*).
	\end{align*}
	From \eqref{hls-1} and \textnormal{(h$_2$)}, we know that
	\begin{align*}
		\int_{\R^N}\left( \int_{\R^N} \frac{F(y, u_n)}{|x-y|^\mu}\,\diff y\right)F(x, u_n)\,\diff x \leq  C_0 \left(\int_{\R^N}\left(|u_n|^{\frac{2Nr_1}{2N-\mu}}+ |u_n|^{\frac{2Nr_2}{2N-\mu}}\right)\,\diff x\right)^{\frac{2N-\mu}{N}},
	\end{align*}
	with a constant $C_0>0$. Due to \eqref{ineq:r1r2} it follows that
	\begin{align}\label{limit-1}
		\lim_{n\to \infty}\int_{\R^N}\left( \int_{\R^N} \frac{F(y, u_n)}{|x-y|^\mu}\,\diff y\right)F(x, u_n)\,\diff x =0
	\end{align}
	and similarly,
	\begin{align}\label{limit-2}
		\lim_{n\to \infty}\int_{\R^N}\left( \int_{\R^N} \frac{F(y, u_n)}{|x-y|^\mu}\,\diff y\right)u_n f(x, u_n)\,\diff x =0.
	\end{align}
	Using \eqref{limit-2} in $\lim_{n\to \infty} I'(u_n)=0$, we easily get
	\begin{align}\label{limit-3}
		\lim_{n\to \infty}\left(\|u_n\|_{1,p}^p + \|\nabla u_n\|_{q,a}^q +  \|u_n\|_{q,a}^q\right)=0.
	\end{align}
	On the other hand, from \eqref{limit-1} and \eqref{limit-3} we get $0=\lim\limits_{n\to \infty}I(u_n)=  b>0$ which is a contradiction.
\end{proof}

Now, we define a sequence $v_n(\,\cdot\,)=u_n(\cdot+y_n)$. Then $\varrho_{\mathcal H}(u_n)=\varrho_{\mathcal H}(v_n)$, so $\{v_n\}_{n\in\N}$ remains bounded in $\V$, see Lemma \ref{PS-bdd}. Moreover, by translation invariance of $I$ and $I'$ due to  \textnormal{(h$_3$)}, implies
\begin{align}\label{new-PS}
	I(v_n)\to b \quad\text{and}\quad I'(v_n)\to 0.
\end{align}
Thus, up to a subsequence, there exists $v\in \V$ such that
\begin{equation}\label{soluzione}
v_n\rightharpoonup v\mbox{ in }\V,\quad v_n\to v\mbox{ in }L^s_{\loc}(\R^N)\mbox{ for any }s\in[1,p^*),
\end{equation}
and also by Lemma \ref{lionl}
\begin{align*}
	\int_{B_r(0)} |v_n(x)|^p\,\diff x \geq \frac{\beta}{2}.
\end{align*} 
From this, it is clear that $v\neq 0$. We shall now use the \textnormal{(PS)$_b$}-sequence $\{v_n\}_{n\in\N}$ for our future purposes.

\begin{proposition}\label{choq-conv}
	Let hypotheses \textnormal{(h$_1$)}--\textnormal{(h$_3$)} be satisfied. For any $\varphi\in C_0^\infty(\R^N)$ we have, up to a subsequence,
	\begin{align*}
		\lim_{n\to \infty}\int_{\R^N}\left( \int_{\R^N} \frac{F(y, v_n)}{|x-y|^\mu}\,\diff y\right)\varphi f(x, v_n)\,\diff x = \int_{\R^N}\left( \int_{\R^N} \frac{F(y,v)}{|x-y|^\mu}\,\diff y\right)\varphi f(x, v)\,\diff x.
	\end{align*}
\end{proposition}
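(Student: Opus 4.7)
Setting $\sigma := \frac{2N}{2N-\mu}$, the plan is to recognise the integrand as an HLS-type bilinear pairing of the two $L^\sigma(\R^N)$ factors $g_n(y) := F(y, v_n(y))$ and $\psi_n(x) := \varphi(x) f(x, v_n(x))$ (with $g$, $\psi$ the analogues for $v$), and then pass to the limit via that factorisation. Proposition~\ref{HLS} with $s = r = \sigma$ guarantees that the bilinear form
\begin{align*}
B(g, \psi) := \int_{\R^N}\int_{\R^N} \frac{g(y)\, \psi(x)}{|x-y|^\mu} \,\diff y \,\diff x
\end{align*}
is continuous on $L^\sigma(\R^N) \times L^\sigma(\R^N)$, so I would reduce the claim to proving (a) $\psi_n \to \psi$ strongly in $L^\sigma(\R^N)$ and (b) $g_n \rightharpoonup g$ weakly in $L^\sigma(\R^N)$, after which the usual strong/weak passage to the limit for bilinear forms closes the argument.

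For (a), the decisive ingredient is the compact support $K := \supp \varphi$. By \eqref{soluzione} and \eqref{growth:exponent:cond}, one has $v_n \to v$ in $L^{r_i\sigma}(K)$ for $i = 1,2$, since $r_i\sigma < p^*$, and a subsequential a.e.\ extraction together with continuity of $f$ gives $f(\cdot,v_n) \to f(\cdot,v)$ a.e.\ on $K$. The growth bound \eqref{new:nonl:growth} yields $|f(x,v_n)|^\sigma \leq C(|v_n|^{(r_1-1)\sigma} + |v_n|^{(r_2-1)\sigma})$, a family that is equi-integrable on $K$ thanks to the strong $L^{r_i\sigma}(K)$ convergence, so Vitali's theorem delivers $f(\cdot,v_n) \to f(\cdot,v)$ in $L^\sigma(K)$, hence $\psi_n \to \psi$ in $L^\sigma(\R^N)$ because $\varphi$ is bounded and vanishes outside $K$.

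For (b), integrating \eqref{new:nonl:growth} gives $|F(x,t)| \leq C(|t|^{r_1} + |t|^{r_2})$, and combining $r_i\sigma < p^*$ with Proposition~\ref{proposition_embeddings} and the boundedness of $\{v_n\}_{n\in\N}$ in $\V$ shows that $\{g_n\}_{n\in\N}$ is bounded in $L^\sigma(\R^N)$. A diagonal extraction over a compact exhaustion of $\R^N$ applied to the local convergences in \eqref{soluzione} produces $v_n \to v$ a.e.\ on $\R^N$, whence $g_n \to g$ a.e.\ by continuity of $F$; the classical fact that pointwise a.e.\ convergence combined with uniform $L^\sigma$-boundedness ($1<\sigma<\infty$) forces weak $L^\sigma$-convergence to the a.e.\ limit then yields (b). With (a) and (b) in hand, the splitting
\begin{align*}
B(g_n,\psi_n) - B(g,\psi) = B(g_n,\psi_n-\psi) + B(g_n-g,\psi)
\end{align*}
is controlled termwise: the first summand by $C\|g_n\|_\sigma\|\psi_n-\psi\|_\sigma \to 0$ via Proposition~\ref{HLS} and (a), the second by pairing $g_n - g$ against $y \mapsto \int_{\R^N} |x-y|^{-\mu}\psi(x)\,\diff x$, which the Riesz-potential form of HLS places in $L^{\sigma'}(\R^N)$ with $\sigma' = \frac{2N}{\mu}$, so weak convergence of $g_n$ in $L^\sigma(\R^N)$ kills it.

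The main obstacle I foresee is precisely the absence of global strong convergence $v_n \to v$, which rules out a direct strong/strong attack on the HLS pairing. The asymmetry created by the cutoff $\varphi$ is what rescues the argument: the compactly supported factor inherits full strong $L^\sigma$-convergence from the local $L^s_{\loc}$ convergence, while the other factor only needs weak convergence, freely delivered by a.e.\ convergence plus a uniform $L^\sigma$ bound.
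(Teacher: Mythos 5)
Your proposal is correct and follows essentially the same route as the paper: the identical two-term splitting, with the term $\int_{\R^N} K(F(\cdot,v_n))\,(f(\cdot,v_n)-f(\cdot,v))\,\varphi\,\diff x$ controlled by HLS boundedness of $K(F(\cdot,v_n))$ in $L^{\frac{2N}{\mu}}(\R^N)$ together with strong $L^{\frac{2N}{2N-\mu}}$-convergence of $\varphi f(\cdot,v_n)$ on $\supp\varphi$, and the remaining term by weak convergence of $F(\cdot,v_n)$ tested against a fixed element of $L^{\frac{2N}{\mu}}(\R^N)$. Your use of Vitali's theorem for the strong-convergence step and of the duality pairing with the Riesz potential of $\varphi f(\cdot,v)$ are only cosmetic variants (indeed slightly tidier) of the paper's dominated-convergence and a.e.-plus-boundedness arguments.
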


\begin{proof}
	By the growth conditions in \eqref{growth:exponent:cond} and \eqref{new:nonl:growth} along with the fact that $v_n$ is bounded in $\V$ gives the boundedness of $F(\cdot, v_n)$ in $L^{\frac{2N}{2N-\mu}}(\R^N)$. In addition, the pointwise convergence of $v_n$ to $v$ and the continuity of $F$ imply that $F(x, v_n)\to F(x,v)$ pointwise a.\,e.\, in $\R$. We define the convolution operator $K\colon L^{\frac{2N}{2N-\mu}}(\R^N) \to L^{\frac{2N}{\mu}}(\R^N)$ by
	\begin{align*}
		K(w)(x):= \frac{1}{|x|^\mu}\ast w(x).
	\end{align*}
	From the Hardy-Littlewood-Sobolev inequality stated in Proposition \ref{HLS}, we obtain that $K$ is a linear and bounded operator. Hence, up to a subsequence, $\{K(F(\cdot, v_n))\}_{n\in\N}$ is bounded in $L^{\frac{2N}{\mu}}(\R^N)$,
	\begin{align*}
		K(F(x,v_n)) \to K(F(x,v))\quad \text{a.\,e.\,in } \R^N
	\end{align*}
	and 
	\begin{align*}
		\int_{\R^N} \int_{\R^N}\frac{F(y, v_n)}{|x-y|^\mu} \psi(x) \,\diff y\,\diff x \to \int_{\R^N} \int_{\R^N}\frac{F(y,v)}{|x-y|^\mu} \psi(x) \,\diff y\,\diff x \quad \text{for every} \ \psi \in L^{\frac{2N}{2N-\mu}}(\R^N).
	\end{align*}
	In particular, for every $\phi \in C_c^\infty(\R^N)$, we have 
	\begin{align}\label{conv:est-1}
		\int_{\R^N} \int_{\R^N} \frac{F(y, v_n)}{|x-y|^\mu} f(x,v) \phi(x) \,\diff y \,\diff x \to \int_{\R^N} \int_{\R^N}\frac{F(y,v)}{|x-y|^\mu} f(x,v) \phi(x) \,\diff y \,\diff x.
	\end{align}
	Now, we claim that for every $\phi \in C_c^\infty(\R^N)$,
	\begin{align}\label{conv:est-2}
		\int_{\R^N} \left(\int_{\R^N} \frac{F(y,v_n)}{|x-y|^\mu} \,\diff y \right) \left( f(x,v_n)- f(x,v)\right) \phi(x) \,\diff x \to 0.
	\end{align}
	Since $\{K(F(\cdot, v_n))\}_{n\in\N}$ is uniformly bounded in $L^\frac{2N}{\mu}(\R^N)$, by H\"older's inequality, it is enough to show that
	\begin{align}\label{conv:est-3}
		\|\left(f(\cdot, v_n)- f(\cdot, v)\right) \phi\|_{\frac{2N}{2N-\mu},\supp(\phi)} \to 0.
	\end{align}
	Using \eqref{new:nonl:growth} and Young's inequality we get
	\begin{align*}
		\left[f(\cdot, v_n) \phi \right]^\frac{2N}{2N-\mu} & \leq C \left(|v_n|^{\frac{2N(r_1-1)}{2N-\mu}} + |v_n|^{\frac{2N(r_2-1)}{2N-\mu}}  \right) \phi^\frac{2N}{2N-\mu} \\
		& \leq C_1 \left( |v_n|^{\frac{2N r_1}{2N-\mu}} + |v_n|^{\frac{2N r_2}{2N-\mu}} \right) + C_2(r_1, r_2, \|\phi\|_{\infty}) \in L^1(\supp(\phi))
	\end{align*}
	for some $C_1, C_2>0$. Then, from Lebesgue's dominated convergence theorem, we obtain the required assertion in \eqref{conv:est-3} and so in \eqref{conv:est-2}. Finally, combining the estimates in \eqref{conv:est-1} and \eqref{conv:est-2}, it is easy to verify that
	\begin{align*}
		\lim_{n\to \infty}\int_{\R^N}\left( \int_{\R^N} \frac{F(y,v_n)}{|x-y|^\mu}\,\diff y\right)\varphi f(x, v_n)\,\diff x = \int_{\R^N}\left( \int_{\R^N} \frac{F(y,v)}{|x-y|^\mu}\,\diff y\right)\varphi f(x, v) \,\diff x.
	\end{align*}
\end{proof}

\begin{proposition}\label{grad-con}
	Let hypotheses \textnormal{(h$_1$)}--\textnormal{(h$_3$)} be satisfied. For a subsequence of $\{v_n\}_{n\in\N}$, we have
	\begin{align*}
		\nabla v_n \to \nabla v \quad\text{pointwise a.\,e.\,in } \R^N.
	\end{align*}
	Consequently, it holds,
	\begin{align*}
		|\nabla v_n|^{p-2}\nabla v_n &\rightharpoonup |\nabla v|^{p-2}\nabla v \quad \text{in } [L^{\frac{p}{p-1}}(\R^N)]^N;\\
		|\nabla v_n|^{q-2}\nabla v_n &\rightharpoonup |\nabla v|^{q-2}\nabla v\quad \text{in } [L^{\frac{q}{q-1}}_a(\R^N)]^N.
	\end{align*}
\end{proposition}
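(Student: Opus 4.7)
The plan is to use a local monotonicity argument based on Simon-type inequalities. Fix a cut-off $\phi \in C_c^\infty(\R^N)$ with $0 \leq \phi \leq 1$; then $(v_n - v)\phi$ is bounded in $\V$, and by \eqref{new-PS} we have $\langle I'(v_n), (v_n - v)\phi\rangle_{\mathcal{H}} \to 0$. Expanding this pairing isolates the ``principal'' contribution
\[
J_n := \int_{\R^N} \bigl(|\nabla v_n|^{p-2}\nabla v_n + a(x)|\nabla v_n|^{q-2}\nabla v_n\bigr)\cdot \nabla\bigl((v_n - v)\phi\bigr)\,\diff x,
\]
a lower order part involving $|v_n|^{p-2}v_n + a(x)|v_n|^{q-2}v_n$ tested against $(v_n - v)\phi$, which vanishes by the local strong convergence in \eqref{soluzione}, and a Choquard part. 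To kill the Choquard part I would show that $f(\cdot, v_n)(v_n - v)\phi \to 0$ in $L^{2N/(2N-\mu)}(\R^N)$: the growth condition \eqref{new:nonl:growth} together with \eqref{ineq:r1r2} (which places $\tfrac{2Nr_i}{2N-\mu}$ strictly between $p$ and $p^*$) reduces this, via H\"older's inequality, to the strong local convergence of $v_n$ in $L^{2Nr_i/(2N-\mu)}_{\loc}(\R^N)$. Combining with the uniform $L^{2N/\mu}$-bound on $K(F(\cdot, v_n))$ provided by Proposition \ref{HLS} gives the desired vanishing, so $J_n \to 0$.

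On the other hand, since $v_n \rightharpoonup v$ in $\V$, multiplication by the fixed $\phi$ (a bounded linear operator on $\V$) gives $(v_n - v)\phi \rightharpoonup 0$ in $\V$, and the double phase operator applied to $v$ defines an element of $\V^*$, so
\[
\int_{\R^N} \bigl(|\nabla v|^{p-2}\nabla v + a(x)|\nabla v|^{q-2}\nabla v\bigr)\cdot \nabla\bigl((v_n - v)\phi\bigr)\,\diff x \to 0.
\]
Subtracting these two limits and decomposing $\nabla((v_n - v)\phi) = (\nabla v_n - \nabla v)\phi + (v_n - v)\nabla \phi$, the part containing $\nabla \phi$ is handled by H\"older's inequality: $|\nabla v_n|^{p-2}\nabla v_n$ is bounded in $[L^{p/(p-1)}(\R^N)]^N$, $|\nabla v_n|^{q-2}\nabla v_n$ is bounded in $[L^{q/(q-1)}_a(\R^N)]^N$, and $(v_n - v)\nabla \phi \to 0$ strongly in $L^p$ and in $L^q$ on the compact set $\supp(\nabla \phi)$ (where $a$ is bounded thanks to $a \in C^{0,\alpha}(\R^N)$).

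What is left is
\[
\int_{\R^N}\Bigl[\bigl(|\nabla v_n|^{p-2}\nabla v_n - |\nabla v|^{p-2}\nabla v\bigr) + a(x)\bigl(|\nabla v_n|^{q-2}\nabla v_n - |\nabla v|^{q-2}\nabla v\bigr)\Bigr]\cdot(\nabla v_n - \nabla v)\,\phi\,\diff x \to 0.
\]
Both summands of the integrand are pointwise nonnegative by the classical monotonicity of $\xi \mapsto |\xi|^{s-2}\xi$, so the first summand alone tends to zero in $L^1(\phi\,\diff x)$; Simon's inequality then forces $\nabla v_n \to \nabla v$ a.e.\,on $\supp(\phi)$ along a subsequence. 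Covering $\R^N$ by an increasing sequence of balls and extracting a diagonal subsequence yields the desired a.e.\,convergence on all of $\R^N$. The two weak convergences stated in the proposition then follow: $\{|\nabla v_n|^{p-2}\nabla v_n\}$ is uniformly bounded in $[L^{p/(p-1)}(\R^N)]^N$ and $\{|\nabla v_n|^{q-2}\nabla v_n\}$ in $[L^{q/(q-1)}_a(\R^N)]^N$ because $\{v_n\}$ is bounded in $\V$, and any a.e.-convergent, norm-bounded sequence in a reflexive Lebesgue space converges weakly to its pointwise limit. The main technical obstacle is the handling of the Choquard term against the $n$-dependent test function $(v_n - v)\phi$, since Proposition \ref{choq-conv} is only stated for test functions independent of $n$; the HLS$+$growth estimate sketched above is what resolves this.
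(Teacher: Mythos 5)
Your proposal is correct and takes essentially the same route as the paper's proof: test $I'(v_n)$ against $(v_n-v)$ times a cut-off, show that the lower-order, cross ($\nabla$ of the cut-off) and Choquard remainders vanish using the uniform $L^{\frac{2N}{\mu}}$-bound on $K(F(\cdot,v_n))$ together with the local strong convergence of $v_n$, then apply Simon's inequality to the remaining monotone term to get local, hence (by the diagonal argument over balls) global, a.e.\ convergence of the gradients, and finally deduce the two weak convergences from norm-boundedness plus a.e.\ convergence. The only cosmetic difference is that you kill the Choquard remainder via a H\"older estimate showing $f(\cdot,v_n)(v_n-v)\phi\to 0$ in $L^{\frac{2N}{2N-\mu}}(\R^N)$, while the paper argues through Young's inequality and dominated convergence; both reductions rest on the same ingredients \eqref{new:nonl:growth}, \eqref{ineq:r1r2} and \eqref{soluzione}.
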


\begin{proof}
	We know that
	\begin{align}\label{known:conv}
		v_n \rightharpoonup v \quad \text{in }  \V,
		\quad
		v_n \to v \quad \text{in } L^s_{\loc}(\R^N)
		\quad\text{and}\quad
		v_n \to v \quad \text{a.\,e.\,in } \R^N
	\end{align}
	for $s \in [1, p^*]$. Let $\psi \in C_c^\infty(\R^N)$, $\psi \geq 0$, $\psi =1$ in $B_R \subset \supp(\psi)$ with $R>0$. Taking $\phi= (v_n -v) \psi$ as test function in \eqref{new-PS} leads to
	\begin{align}\label{conv:est-0}
		\begin{split}
			&\lim_{n \to \infty}  \int_{\R^N} \left( |\nabla v_n|^{p-2} \nabla v_n - |\nabla v|^{p-2} \nabla v + a(x) \left(|\nabla v_n|^{q-2} \nabla v_n - |\nabla v|^{q-2} \nabla v\right)\right) \cdot (\nabla (v_n-v)) \psi \,\diff x\\
			& = -  \lim_{n \to \infty} \int_{\R^N} \bigg( |\nabla v_n|^{p-2} \nabla v_n - |\nabla v|^{p-2} \nabla v\\
			&\qquad\qquad \qquad\qquad + a(x) \left(|\nabla v_n|^{q-2} \nabla v_n - |\nabla v|^{q-2} \nabla v\right)\bigg) \cdot \nabla \psi \ (v_n-v) \,\diff x\\
			& \quad -  \lim_{n \to \infty}  \int_{\R^N} \left( |\nabla v|^{p-2} \nabla v + a(x)|\nabla v|^{q-2} \nabla v\right)  \cdot \nabla (v_n-v) \psi \,\diff x\\ 
			&\quad -  \lim_{n \to \infty}  \int_{\R^N} \left(|v_n|^p + a(x) |v_n|^q\right) (v_n-v) \psi \,\diff x\\
			& \quad +  \lim_{n \to \infty}  \int_{\R^N} \left( \int_{\R^N} \frac{F(y, v_n)}{|x-y|^\mu}\,\diff y\right)f(x, v_n) (v_n -v) \varphi\,\diff x.
		\end{split}
	\end{align}
	By H\"older's inequality, we observe that 
	\begin{align*}
		[L^{\mathcal{H}}(\R^N)]^N \ni h \longmapsto \int_{\R^N} \left( |\nabla v|^{p-2} + a(x)|\nabla v|^{q-2} \right) \nabla v \cdot h \,\diff x
	\end{align*}
	and
	\begin{align*}
		L^{\mathcal{H}}(\R^N) \ni g \longmapsto \int_{\R^N} \left( |\nabla v|^{p-2} + a(x)|\nabla v|^{q-2} \right)  v \cdot g \,\diff x
	\end{align*}
	are bounded linear functionals. Now, by using \eqref{known:conv} and  $\psi \in C_c^\infty(\R^N)$, it is clear that \begin{align}\label{conv-est-1}
		\begin{split}
			\lim_{n \to \infty}  \int_{\R^N} 
			&\Big(\left( |\nabla v|^{p-2} \nabla v + a(x) |\nabla v|^{q-2} \nabla v\right)  \cdot \nabla (v_n-v)\\
			&\quad  +  (|v_n|^p + a(x) |v_n|^q) (v_n-v)\Big) \psi \,\diff x =0,
		\end{split}
	\end{align}
	and
	\begin{align}\label{conv-est-2}
		\begin{split}
			\lim_{n \to \infty} \int_{\R^N} &\Big(\left( |\nabla v_n|^{p-2} \nabla v_n -  |\nabla v|^{p-2} \nabla v\right)\\ 
			&\qquad + a(x) \left(|\nabla v_n|^{q-2} \nabla v_n - |\nabla v|^{q-2} \nabla v\right)\Big) \cdot \nabla \psi \ (v_n-v) \,\diff x=0.
		\end{split}
	\end{align}
Following the arguments of Proposition \ref{choq-conv} and using \eqref{known:conv} as well as \eqref{new:nonl:growth}, we obtain
	\begin{align}\label{new-4}
		K(F(x,v_n)) := \int_{\R^N}  \frac{F(y, v_n)}{|x-y|^\mu}\,\diff y
		\quad \text{is uniformly bounded w.\,r.\,t.\,} n\in\mathbb N  \text{ in }
		L^{\frac{2N}{\mu}}(\R^N),
	\end{align}
	and by using Young's inequality, we have
	\begin{align}\label{new-3}
		\begin{split}
			&| K(F(\cdot, v_n)) f(\cdot, v_n) (v_n-v) \psi|\\
			&\leq C_0 \left( |K(F(\cdot, v_n))|^\frac{2N}{\mu} + |f(\cdot, v_n) (v_n-v) \psi|^{\frac{2N}{2N-\mu}} \right)\\
			& \leq  C_1 \left( |K(F(\cdot, v_n))|^\frac{2N}{\mu} + \left(|v_n|^{\frac{2N(r_1-1))}{2N-\mu}} |(v_n-v) \psi|^\frac{2N}{2N-\mu} + |v_n|^{\frac{2N(r_2-1))}{2N-\mu}} |(v_n-v) \psi|^\frac{2N}{2N-\mu} \right) \right)\\
			& \leq C_2 ( |K(F(\cdot, v_n))|^\frac{2N}{\mu} + |v_n|^{\frac{2r_1N}{2N-\mu}} + |v_n|^{\frac{2r_2N}{2N-\mu}} + |v_n-v|^{\frac{2r_1N}{2N-\mu}} +|v_n-v|^{\frac{2r_2N}{2N-\mu}})  \in L^1(\supp(\psi))
		\end{split}
	\end{align}
	due to \eqref{ineq:r1r2} whereby $C_0, C_1, C_2$ are positive constants. Combining the above facts and using Lebesgue's dominated convergence theorem, we get
	\begin{align}\label{conv-est-3}
		\lim_{n \to \infty}  \int_{\R^N} \left( \int_{\R^N} \frac{F(y,v_n)}{|x-y|^\mu}\,\diff y\right)f(x, v_n) (v_n -v) \varphi \,\diff x =0.
	\end{align}
	Now, using the convergence results of \eqref{conv-est-1}-\eqref{conv-est-3} in \eqref{conv:est-0}, it follows that
	\begin{align*}
		\lim_{n \to \infty}  \int_{\R^N} \left[\left( |\nabla v_n|^{p-2} \nabla v_n - |\nabla v|^{p-2} \nabla v\right) + a(x) \left(|\nabla v_n|^{q-2} \nabla v_n - |\nabla v|^{q-2} \nabla v\right)\right] \cdot \nabla (v_n-v)\ \psi \,\diff x =0.
	\end{align*}
	On the last expression we can apply Simon's inequalities (see Simon \cite[formula (2.2)]{Simon-1978}) and use the fact that $\psi =1$ in $B_R$. This gives
	\begin{align*}
		\lim_{n \to \infty}  \int_{B_R} |\nabla v_n - \nabla v|^p\,\diff x =0,
	\end{align*}
	and since the choice of cut-off function $\psi$ with $B_R \subset \supp(\psi)$, $R>0$ is arbitrary,
	\begin{align*}
		\nabla v_n \to \nabla v \quad \text{pointwise a.\,e.\,in }  \R^N.
	\end{align*}
However, this says that
	\begin{align*}
			|v_n|^{p-2} v_n \to |v|^{p-2} v
			\quad \text{pointwise a.\,e.\,in }  \R^N.
	\end{align*}
Since $\{|v_n|^{p-2} v_n \}_{n\in\N}$ is bounded in $[L^{\frac{p}{p-1}}(\R^N)]^N$,  we conclude that
	\begin{align*}
		|\nabla v_n|^{p-2}\nabla v_n \rightharpoonup |\nabla v|^{p-2}\nabla v
		\quad \text{in } [L^{\frac{p}{p-1}}(\R^N)]^N.
	\end{align*}
In a similar way, we can establish that
	\begin{align*}
		|\nabla v_n|^{q-2}\nabla v_n \rightharpoonup |\nabla v|^{q-2}\nabla v
		\quad \text{in } [L^{\frac{q}{q-1}}_a(\R^N)]^N.
	\end{align*}
\end{proof}

Now we can prove that problem \eqref{problem} has a nontrivial weak solution which shows the first part of Theorem \ref{main_result}.

\begin{theorem}\label{exis:res}
	Let hypotheses \textnormal{(h$_1$)}--\textnormal{(h$_3$)} be satisfied. Then the element $v\in V$ set in \eqref{soluzione} is a critical point of the functional $I$, and so a weak solution for problem \eqref{problem}. Moreover, $I(v) \leq b$.
\end{theorem}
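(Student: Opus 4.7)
The plan is to split the argument into two parts: (i) verify that the weak limit $v$ is a critical point of $I$, hence a weak solution of \eqref{problem}; and (ii) establish the energy bound $I(v)\leq b$ by comparing with $I(v_n)-\tfrac{1}{\theta}\langle I'(v_n),v_n\rangle_{\mathcal H}$ and exploiting the sign information provided by the AR-condition.

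For step (i), I fix an arbitrary $\varphi\in C_c^\infty(\R^N)$ and pass to the limit in $\langle I'(v_n),\varphi\rangle_{\mathcal H}\to 0$. The two divergence-form pieces converge to their $v$-analogues via the weak convergences $|\nabla v_n|^{p-2}\nabla v_n\rightharpoonup|\nabla v|^{p-2}\nabla v$ in $[L^{p/(p-1)}(\R^N)]^N$ and the $q$-counterpart in $[L^{q/(q-1)}_a(\R^N)]^N$ delivered by Proposition \ref{grad-con}, paired against $\nabla\varphi$. The zero-order terms pass through Lebesgue's dominated convergence theorem on $\supp\varphi$, combining the local $L^s$-convergence in \eqref{soluzione} with the pointwise a.e.\ convergence of $v_n$ to $v$. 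The Choquard term is handled directly by Proposition \ref{choq-conv}. Therefore $\langle I'(v),\varphi\rangle_{\mathcal H}=0$ for every $\varphi\in C_c^\infty(\R^N)$, and by density of $C_c^\infty(\R^N)$ in $\V$ (as recalled in Section \ref{sec_2}) together with the continuity of $I'$, $v$ is a critical point of $I$ and hence a weak solution of \eqref{problem}.

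For step (ii), I form $I(v_n)-\tfrac{1}{\theta}\langle I'(v_n),v_n\rangle_{\mathcal H}$. Since $\{v_n\}$ is bounded in $\V$ by Lemma \ref{PS-bdd} and $I'(v_n)\to 0$ in $\V^*$, one has $\langle I'(v_n),v_n\rangle_{\mathcal H}\to 0$, so this quantity still tends to $b$. A direct algebraic rearrangement yields
\begin{align*}
I(v_n)-\tfrac{1}{\theta}\langle I'(v_n),v_n\rangle_{\mathcal H}
&=\Bigl(\tfrac1p-\tfrac1\theta\Bigr)\|v_n\|_{1,p}^p+\Bigl(\tfrac1q-\tfrac1\theta\Bigr)\bigl(\|\nabla v_n\|_{q,a}^q+\|v_n\|_{q,a}^q\bigr)\\
&\quad+\int_{\R^N}\int_{\R^N}\frac{F(y,v_n)}{|x-y|^\mu}\left[\frac{v_n(x)f(x,v_n(x))}{\theta}-\frac{F(x,v_n(x))}{2}\right]\diff x\,\diff y,
\end{align*}
in which the bracketed factor is pointwise nonnegative by the AR condition \eqref{AR-cond}. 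Taking $\liminf$, weak lower semicontinuity of the convex modular controls the three norm summands, while Fatou's lemma applied to the nonnegative Choquard integrand (using pointwise a.e.\ convergence $v_n\to v$ from Proposition \ref{grad-con} and continuity of $f,F$) controls the nonlocal term. Combining with the criticality identity $\langle I'(v),v\rangle_{\mathcal H}=0$ from step (i), I arrive at
\[
b=\lim_{n\to\infty}\Bigl[I(v_n)-\tfrac{1}{\theta}\langle I'(v_n),v_n\rangle_{\mathcal H}\Bigr]\geq I(v)-\tfrac{1}{\theta}\langle I'(v),v\rangle_{\mathcal H}=I(v).
\]

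The most delicate point is the Fatou estimate on the nonlocal double integral, since no global compactness is available for the Choquard term on $\R^N$. I therefore rely entirely on the sign information coming from the AR condition together with positivity of $F$, which is exactly why the specific weight $1/\theta$ is chosen in the combination---any smaller weight would fail to render every summand manifestly nonnegative. All remaining ingredients (weak lower semicontinuity of the modular on the reflexive space $\V$, the local Sobolev compactness used in verifying criticality, and the weak gradient convergences) are already packaged in Propositions \ref{choq-conv} and \ref{grad-con}.
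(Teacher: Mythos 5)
Your proposal is correct and follows essentially the same route as the paper: criticality is obtained by passing to the limit in $\langle I'(v_n),\varphi\rangle_{\mathcal H}$ for $\varphi\in C_c^\infty(\R^N)$ via Propositions \ref{choq-conv} and \ref{grad-con} plus density, and $I(v)\le b$ via the combination $I(v_n)-\tfrac1\theta\langle I'(v_n),v_n\rangle_{\mathcal H}\to b$, the sign from \eqref{AR-cond}, and a Fatou-type lower semicontinuity argument finished with $\langle I'(v),v\rangle_{\mathcal H}=0$. The only cosmetic difference is that the paper localizes the nonlocal term to balls $B_\eta(0)$ and uses strong local convergence before letting $\eta$ be arbitrary, whereas you apply Fatou directly on $\R^N$; for that you also need the a.e.\ convergence (up to a subsequence) of the inner convolution $K(F(\cdot,v_n))$, which is exactly the intermediate fact established inside the proof of Proposition \ref{choq-conv}.
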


\begin{proof}
	The proof is provided by showing
	\begin{align}\label{th-1}
		\lim_{n\to \infty}\left\langle I'(v_n),\varphi\right\rangle_{\mathcal{H}}= \left\langle I'(v),\varphi\right\rangle_{\mathcal{H}}	\quad \text{for all } \varphi\in C_c^\infty(\R^N).
	\end{align}
	Recall that
	\begin{align*}
			\langle I'(v_n),\varphi\rangle_{\mathcal{H}}
			&= \int_{\R^N}\left(|\nabla v_n|^{p-2}\nabla v_n+ a(x)|\nabla v_n|^{q-2}\nabla v_n\right)\cdot \nabla \varphi\,\diff x  + \int_{\R^N}\left(| v_n|^{p-2} v_n+ a(x)| v_n|^{q-2} v_n\right) \varphi\,\diff x\\
			& \qquad-  \int_{\R^N}\left( \int_{\R^N} \frac{F(y,v_n)}{|x-y|^\mu}\,\diff y\right)f(x, v_n) \varphi\,\diff x.
	\end{align*}
	Applying Propositions \ref{grad-con} and \ref{choq-conv} we easily derive \eqref{th-1}. Now, using \eqref{new-PS} and  the density of $C_c^\infty(\R^N)$ in $\V$ given in \cite[Proposition 6.4.4]{Harjulehto-Hasto-2019}, we obtain $I'(v)=0$.
	
	Let us now prove that $I(v) \leq b$. From Proposition \ref{grad-con}, \eqref{AR-cond}, Fatou's lemma and the fact that $v_n \to v$ in $L^{q}(B_\eta(0))$ for any $\eta >0$, we derive
	\begin{align*}
		\lim\inf_{ n \to \infty} \left(I(v_n) - \frac{1}{\theta} I'(v_n) v_n \right) &= \liminf_{n\to \infty} \left(\frac{(\theta-p)}{p\theta}\|v_n\|_{1,p}^p + \frac{(\theta-q)}{q\theta} \left(\|\nabla v_n\|_{q,a}^q+\|v_n\|_{q,a}^q \right)\right)\\
		& \quad + \liminf_{n\to \infty}  \frac{1}{2\theta} \int_{\R^N}\left( \int_{\R^N} \frac{F(y,v_n)}{|x-y|^\mu}\,\diff y\right) \left(2 v_n f(x,v_n) - \theta F(x,v_n)\right) \,\diff x \\
		& \geq \liminf_{n\to \infty} \left(\frac{(\theta-p)}{p\theta}\|v_n\|_{1,p}^p +  \frac{(\theta-q)}{q\theta} \left(\|\nabla v_n\|_{q,a}^q+\|v_n\|_{q,a}^q \right)\right) \\
		& \quad  + \liminf_{n\to \infty} \frac{1}{2\theta} \int_{B_{\eta}(0)}\left( \int_{B_{\eta}(0)} \frac{F(y,v_n)}{|x-y|^\mu}\,\diff y\right) \left(2 v_n f(x,v_n) - \theta F(x,v_n)\right) \,\diff x\\
		& = \frac{(\theta-p)}{p\theta}\|v\|_{1,p}^p  + \frac{(\theta-q)}{q\theta} \left(\|\nabla v\|_{q,a}^q+\|v\|_{q,a}^q \right) \\
		& \quad + \frac{1}{2\theta} \int_{B_{\eta}(0)}\left( \int_{B_{\eta}(0)} \frac{F(y,v)}{|x-y|^\mu}\,\diff y\right) \left(2 v f(x,v) - \theta F(x,v)\right) \,\diff x.
	\end{align*}
	Since the left hand side of above inequality tends to $b$ as $n \to \infty$ and $\eta$ is arbitrary, we obtain
	\begin{align*}
		b & \geq \frac{(\theta-p)}{p\theta}\|v\|_{1,p}^p + \frac{(\theta-q)}{q\theta} \left(\|\nabla v\|_{q,a}^q+\|v\|_{q,a}^q \right) + \frac{1}{2\theta} \int_{\R^N}\left( \int_{\R^N} \frac{F(y,v)}{|x-y|^\mu}\,\diff y\right) \left(2 v f(x,v) - \theta F(x, v)\right) \,\diff x \\
		& = I(v) - \frac{1}{\theta} I'(v) v = I(v).
	\end{align*}
	This shows the assertion of the theorem.
\end{proof}

Now we are going to show that problem \eqref{problem} has a ground state solution if we suppose in addition hypothesis \textnormal{(h$_4$)}. For this purpose, we introduce the Nehari manifold associated to problem \eqref{problem} given by
\begin{align*}
	\mathcal{N}= \left\{u\in \V \setminus \{0\}:\; \langle  I'(u), u \rangle_{\mathcal{H}}=0\right \}.
\end{align*}
This means that for any $u\in \mathcal N$,
we have
\begin{align}\label{Nehari-prop}
	& \left(\|\nabla u\|_{p}^p + \|u\|_{p}^p \right) + \left(\|\nabla u\|_{q,a}^q + \|u\|_{q, a}^q \right) =  \int_{\R^N}\left( \int_{\R^N} \frac{F(y, u)}{|x-y|^\mu}\,\diff y\right)f(x,u) u(y)\,\diff x.
\end{align}
Let us define
\begin{align*}
	m:= \inf_{u\in \mathcal N}I(u)
\end{align*}

Now we are ready to prove the existence of a ground state solution under hypotheses \textnormal{(h$_1$)}--\textnormal{(h$_4$)} which completes the proof of Theorem \ref{main_result}.

\begin{proof}[Proof of Theorem \ref{main_result}]
	We are going to show that $I(v)=m$. To this end, for $u \in \mathcal{N}$, we consider the fibering function $\Phi_u\colon (0, \infty) \to \R$ defined by $\Phi_u(t) = I(tu)$ such that
	\begin{align}\label{fiber-map}
		\begin{split}
			\Phi'_u(t) = \langle I'(tu), u \rangle_{\mathcal{H}} 
			&= t^{p-1} \|u\|_{1,p}^p + t^{q-1} \left(\|\nabla u\|_{q,a}^q + \|u\|_{q, a}^q \right) \\
			& \qquad - \int_{\R^N}\left( \int_{\R^N} \frac{F(y,tu)}{|x-y|^\mu}\,\diff y\right)f(x, tu) u\,\diff x.
		\end{split}
	\end{align}
	From \eqref{Nehari-prop} and \eqref{fiber-map}, it follows for $t > 1$ that
	\begin{align}\label{GS:est-1}
		\begin{split}
			\Phi'_u(t) &\leq t^{q-1} \left(\|\nabla u\|_{p}^p + \|u\|_{p}^p + \|\nabla u\|_{q,a}^q + \|u\|_{q, a}^q \right) - \int_{\R^N}\left( \int_{\R^N} \frac{F(y,tu)}{|x-y|^\mu}\,\diff y\right)f(x, tu) u\,\diff x\\
			& = t^{q-1}  \int_{\R^N}\left( \int_{\R^N} \frac{F(y, u)}{|x-y|^\mu}\,\diff y\right)f(x,u) u\,\diff x -  \int_{\R^N}\left( \int_{\R^N} \frac{F(y,tu)}{|x-y|^\mu}\,\diff y\right)f(x, tu) u\,\diff x.
		\end{split}
	\end{align}
	Using \textnormal{(h$_4$)} and Remark \ref{rema:2} in \eqref{GS:est-1}, we obtain
	\begin{align*}
		\frac{\Phi'_u(t)}{ t^{q-1}} &\leq \int_{\R^N}\left( \int_{\R^N} \frac{F(y, u)}{|x-y|^\mu}\,\diff y\right) f(x,u) u\,\diff x -  \int_{\R^N}\left( \int_{\R^N} \frac{F(y,tu)}{|x-y|^\mu}\,\diff y\right)\frac{f(x,tu) |u|^\frac{q}{2}}{t^\frac{q}{2} | tu|^{\frac{q}{2}-2} tu}\,\diff x\\
		& \leq  \int_{\R^N}\left( \int_{\R^N} \frac{F(y, u)}{|x-y|^\mu}\,\diff y\right) f(x,u) u \,\diff x -  \int_{\R^N}\left( \int_{\R^N} \frac{F(y,tu) t^\frac{-q}{2}}{|x-y|^\mu}\,\diff y\right) f(x,u) u\,\diff x \leq 0.
	\end{align*}
	Hence, $\Phi'_u(t) \leq 0$ for $t>1$. Arguing similarly as above for $t<1$, we obtain $\Phi'_u(t) \geq 0$ for $t<1.$ Therefore, the number $1$ is a point of a maximum for the function $\Phi'_u$, that is,
	\begin{align*}
		I(u) = \Phi_u(1) = \max_{t \in [0, \infty]} \Phi_u(t)= \max_{t \in [0, \infty]} I(tu).
	\end{align*}

	Now, we define the map $\gamma\colon  [0,1] \to \V$ as $\gamma(t) = (t_* u) t$ such that $I(t_\ast u) <0$ and $t_\ast>1.$ The mountain pass geometry of the energy functional $I$ implies that the map $\gamma$ is well defined and $\gamma \in \Gamma.$ Hence,
	\begin{align*}
		b \leq \max\limits_{0 \leq t \leq 1} I( \gamma(t)) \leq I((t_\ast u) t_\ast^{-1}) = I(u),
	\end{align*}
	where the second inequality follows by using the fact that $1$ is a point of a maximum of the map $t \to \Phi_u(t)$ for $u \in \mathcal{N}$. Since $u \in \mathcal{N}$ was arbitrary chosen, we deduce
	\begin{align}\label{ineq-1}
		b \leq m.
	\end{align}

	Let $v$ be the solution of problem \eqref{problem} obtained in Theorem \ref{exis:res} such that
	\begin{align}\label{ineq-2}
		I(v) \leq b \quad \text{and} \quad \langle I'(v), \phi \rangle_{\mathcal{H}} =0 \quad \text{for all }  \phi \in \V,
	\end{align}
	where the mountain pass level $b$ is defined in \eqref{MP-level}. Now, by using the fact that $v \in \mathcal{N}$ and combining \eqref{ineq-1} and \eqref{ineq-2}, we obtain the required claim $I(v)=m$.
\end{proof}

\section{Existence of ground state solution without AR-condition}\label{sec_4}

In this section, we establish the existence of a ground state solution of problem \eqref{problem} under the assumptions \textnormal{(h$_1$)}, \textnormal{(h$_2'$)} and \textnormal{(h$_3$)}. We start with the mountain pass geometry.

\begin{lemma}\label{exis-wo-ar-lem1}
	Let hypotheses \textnormal{(h$_1$)}, \textnormal{(h$_2'$)} and \textnormal{(h$_3$)} be satisfied. 
	\begin{enumerate}
		\item[\textnormal{(i)}] 
			There exist $R$, $\sigma>0$ such that $I(u)\geq \sigma$ for all $u\in \V$ with $\|u\|=R$.
		\item[\textnormal{(ii)}] 
			There exists $e\in \V$ with $\|e\|>\sigma$ such that $I(e)<0$.
	\end{enumerate}
\end{lemma}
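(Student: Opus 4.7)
The plan is to mimic Lemma \ref{M-P}, recalling that the AR hypothesis was only used in its part \textnormal{(ii)}, not in its part \textnormal{(i)}. Thus part \textnormal{(i)} of the present lemma transfers verbatim, while part \textnormal{(ii)} requires replacing the AR-driven bound $F(x,t)\gtrsim |t|^{\theta/2}$ with $\theta>q$ by the weaker statement \textnormal{(h$_2'$)(ii)}, namely $F(x,t)/|t|^{q/2}\to\infty$ uniformly in $x$.

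For part \textnormal{(i)} I would copy the calculation of Lemma \ref{M-P}\textnormal{(i)} line by line: only \eqref{new:nonl:growth}, \eqref{ineq:r1r2}, Proposition \ref{HLS} and Proposition \ref{proposition_embeddings} enter, and none of these invokes AR. For part \textnormal{(ii)}, I would fix $u_0\in C_c^\infty(\R^N)$ with $u_0\geq 0$, $u_0\not\equiv 0$, choose $\varepsilon>0$ such that $A:=\{u_0\geq \varepsilon\}$ has positive Lebesgue measure, and set
\begin{equation*}
K := \int_A\int_A \frac{u_0(x)^{q/2}u_0(y)^{q/2}}{|x-y|^\mu}\,\diff x\,\diff y,
\end{equation*}
which is strictly positive and, by Proposition \ref{HLS}, finite, since $u_0^{q/2}\in L^s(\R^N)$ for every $s\in[1,\infty]$. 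Given any $M>0$, \textnormal{(h$_2'$)(ii)} supplies $T_M>0$ with $F(x,t)\geq M|t|^{q/2}$ whenever $|t|\geq T_M$; hence for $s\geq T_M/\varepsilon$ one has $s u_0(x)\geq T_M$ on $A$, and using $F\geq 0$ from Remark \ref{rema:2} to discard the integral outside $A\times A$ yields
\begin{equation*}
\int_{\R^N}\int_{\R^N} \frac{F(y,su_0)F(x,su_0)}{|x-y|^\mu}\,\diff x\,\diff y \geq M^2 K s^q.
\end{equation*}
Plugging this into $I(su_0)$ and using $s^p\leq s^q$ for $s\geq 1$ gives
\begin{equation*}
I(su_0) \leq s^q\left(\frac{\|u_0\|_{1,p}^p}{p}+\frac{\|\nabla u_0\|_{q,a}^q+\|u_0\|_{q,a}^q}{q}-\frac{M^2 K}{2}\right).
\end{equation*}
Choosing $M$ so that the parenthesized coefficient is strictly negative, and then $s$ large so that additionally $\|su_0\|>R$, the element $e:=su_0$ does the job.

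The main obstacle, and the reason \textnormal{(h$_2'$)(ii)} is calibrated at the exponent $q/2$, is that under the dilation $u\mapsto su_0$ both the $q$-modular kinetic part of $I(su_0)$ and the squared Choquard nonlinearity scale as $s^q$, so one cannot win by a higher power of $s$ as in the AR case (where $s^\theta$ with $\theta>q$ beat $s^q$). The only available mechanism is to inflate the \emph{coefficient} of $s^q$ by taking $M$ large, and this is exactly what $F\cdot F\geq M^2|t|^q$ inside the Choquard integral delivers through the squared structure of the nonlocal term.
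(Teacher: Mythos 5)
Your proof is correct and follows essentially the same route as the paper: part (i) is carried over verbatim from Lemma \ref{M-P}(i), and in part (ii) you use \textnormal{(h$_2'$)}(ii) to inflate the coefficient of $s^q$ coming from the squared Choquard term until it dominates the kinetic part, which is exactly the paper's mechanism (there with $F(x,tu)\geq l|tu|^{q/2}$ and the double integral of $|u|^{q/2}$). Your localization to the superlevel set $A=\{u_0\geq\varepsilon\}$, discarding the complement via $F\geq 0$, is in fact a cleaner execution of this same step than the paper's requirement that $|t_*u(x)|>C_l$ hold uniformly on all of $\R^N$, which a decaying $\V$-function cannot literally satisfy.
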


\begin{proof}

\textnormal{(i)} The proof works in the same way as the one of Lemma \ref{M-P}(i).

\textnormal{(ii)} 	We choose $u\in \V$ such that $u>0$, $\|u\|=1$ and
	\begin{align*}
		\int_{\R^N}\left(\int_{\R^N}\frac{|u(x)|^{\frac{q}{2}}}{|x-y|^\mu}\,\diff x \right)|u(y)|^{\frac{q}{2}}\,\diff y>0.
	\end{align*} 
	For $t>1$ large enough we have
	\begin{align*}
		I(tu) \leq \frac{t^q\rho(u)}{p}- \frac12 \int_{\R^N}\left( \int_{\R^N} \frac{F(y,tu)}{|x-y|^\mu}\,\diff y\right) F(x,tu) \,\diff x.
	\end{align*}
	Moreover,  by Remark \ref{rema:2}, we know that for any $l>0$ there exists $C_l>0$ such that
	\begin{align*}
		F(x,tu)> l|tu(x)|^{\frac{q}{2}},\;\text{when}\; |tu(x)|>C_l
	\end{align*}
	uniformly in $x \in \R^N$. Using this estimate, we obtain \begin{align*}
		I(tu) \leq \frac{t^q\rho(u)}{p}-\frac{l^2t^q}{2} \int_{\R^N}\left(\int_{\R^N}\frac{|u(y)|^{\frac{q}{2}}}{|x-y|^\mu}\,\diff y \right)|u(x)|^{\frac{q}{2}}\,\diff x
	\end{align*}
	when $|tu|>C_l$. Thus, for suitable $l$, we can find $t_*>0$ sufficiently large such that $|t_*u(x)|> C_l$ uniformly for $x \in \R^N$ with $\|t_*u\|>\sigma$ and $I(t_*u)<0$ for some $\sigma>0$. This proves the assertion of the lemma by fixing $e=t_*u$.
\end{proof}

A direct consequence of Lemma \ref{exis-wo-ar-lem1} is the following result.

\begin{corollary}
	Let hypotheses \textnormal{(h$_1$)}, \textnormal{(h$_2'$)} and \textnormal{(h$_3$)} be satisfied. Then there exist $r>0$ and $w\in \V$ such that $\|w\|>r$ and
	\begin{align*}
		A:= \inf_{\|u\|=r}I(u) >I(0)=0\geq I(w).
	\end{align*}
\end{corollary}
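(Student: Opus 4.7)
The plan is to read this corollary as a repackaging of Lemma \ref{exis-wo-ar-lem1}. First I would observe that $I(0)=0$: this is immediate from the definition of $I$ together with $F(x,0)=0$, which holds because $F(x,\cdot)$ is the antiderivative of $f(x,\cdot)$ vanishing at the origin and the modular terms clearly annihilate the zero function. Next I would set $r:=R$, with $R$ furnished by Lemma \ref{exis-wo-ar-lem1}(i). By that lemma, every $u\in\V$ with $\|u\|=r$ satisfies $I(u)\geq\sigma$, hence
\begin{align*}
A=\inf_{\|u\|=r}I(u)\geq\sigma>0=I(0),
\end{align*}
which is the first inequality in the statement.

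For the production of $w$, I would revisit the construction in the proof of Lemma \ref{exis-wo-ar-lem1}(ii), where $e=t_*u$ with $\|u\|=1$ and $t_*>0$ is chosen large enough that $|t_*u(x)|>C_l$ uniformly in $x$ and that the superquadratic (in $t$) negative term dominates the polynomial growth $t^q\rho(u)/p$, forcing $I(t_*u)<0$. Since all the constraints on $t_*$ are of the form \emph{$t_*$ sufficiently large}, there is no obstacle to enlarging $t_*$ further so that, in addition, $t_*\|u\|=t_*>r$. Defining $w:=t_*u$ with such an enlarged $t_*$ then yields $\|w\|>r$ and $I(w)<0\leq I(0)$, which closes the argument. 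The only point requiring attention is the compatibility of the two conditions on $t_*$ (negativity of $I$ and $t_*>r$), and this causes no difficulty since both are lower bounds on $t_*$.
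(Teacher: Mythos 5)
Your proposal is correct and follows essentially the same route as the paper: the corollary is read off directly from the mountain pass geometry of Lemma \ref{exis-wo-ar-lem1}, taking $r$ to be the radius from part (i) and $w$ to be the point with negative energy from part (ii). Your extra step of enlarging $t_*$ so that $\|w\|>r$ is a sensible (and slightly more careful) touch, since the paper's statement of part (ii) only guarantees $\|e\|>\sigma$ while its proof of the corollary silently identifies the radius with $\sigma$; as you note, all constraints on $t_*$ are lower bounds, so the enlargement is harmless.
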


\begin{proof}
	Taking Lemma \ref{exis-wo-ar-lem1} into account, we get
	\begin{align*}
		A:= \inf_{\|u\|=\sigma}I(u)\geq R >I(0)=0 > I(e).
	\end{align*}
	The result follows by fixing $r=\sigma$ and $w=e$.
\end{proof}

\begin{lemma}\label{eps-lem}
	Let hypotheses \textnormal{(h$_1$)}, \textnormal{(h$_2'$)} and \textnormal{(h$_3$)} be satisfied. Then there exist $r_0>0$ and $\varepsilon>0$ such that $0<\|u\|<r_0$ implies
	\begin{align*}
		I(u)\geq \varepsilon\|u\|^q
		\quad \text{and}\quad 
		\langle I'(u),u \rangle_\mathcal{H} \geq \varepsilon\|u\|^q.
	\end{align*}
\end{lemma}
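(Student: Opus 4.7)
The plan is to mirror the strategy of Lemma \ref{lemma-positive-b} and Lemma \ref{M-P}(i): estimate both quantities in terms of the modular $\rho(u)$, then convert back to the norm $\|u\|$ via Proposition \ref{proposition_modular_properties}. The key observation is that the Choquard term can be absorbed by $\rho(u)$ when $\rho(u)$ is sufficiently small, because of the exponent relation $2r_1/p,\, 2r_2/p > 1$ recorded in \eqref{ineq:r1r2}.

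First I would bound $I(u)$ from below. Since $p<q$, the energy part of $I$ satisfies $\frac{\|u\|_{1,p}^p}{p}+\frac{\|\nabla u\|_{q,a}^q+\|u\|_{q,a}^q}{q}\geq \frac{\rho(u)}{q}$. For the Choquard term I would use exactly the chain of inequalities in \eqref{hls-new16} (Hardy--Littlewood--Sobolev followed by the continuous embedding of Proposition \ref{proposition_embeddings} and the growth condition \eqref{new:nonl:growth}), which gives a constant $C_3>0$ with
\begin{equation*}
    I(u)\geq \frac{\rho(u)}{q}-\frac{C_3}{2}\bigl(\rho(u)^{2r_1/p}+\rho(u)^{2r_2/p}\bigr).
\end{equation*}
Since $2r_1/p,\,2r_2/p>1$, there exists $\delta_1\in(0,1)$ such that $\rho(u)\leq\delta_1$ forces the bracketed term to be at most $\frac{\rho(u)}{qC_3}$, and so $I(u)\geq \frac{\rho(u)}{2q}$.

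Next I would perform the analogous computation for $\langle I'(u), u\rangle_{\mathcal H}=\rho(u)-\int_{\R^N}\!\bigl(\int_{\R^N}\frac{F(y,u)}{|x-y|^\mu}\diff y\bigr)f(x,u)u\diff x$. Applying Proposition \ref{HLS} to the pair $(F(\cdot,u),\, u f(\cdot,u))$, together with $|u f(x,u)|\leq C(|u|^{r_1}+|u|^{r_2})$ from \eqref{new:nonl:growth} and the same embedding argument, yields a constant $C_4>0$ with
\begin{equation*}
    \int_{\R^N}\!\Bigl(\int_{\R^N}\tfrac{F(y,u)}{|x-y|^\mu}\diff y\Bigr)f(x,u)u\diff x\leq C_4\bigl(\rho(u)^{2r_1/p}+\rho(u)^{2r_2/p}\bigr),
\end{equation*}
so, after possibly shrinking $\delta_1$, one obtains $\langle I'(u), u\rangle_{\mathcal H}\geq \frac{\rho(u)}{2}$ whenever $\rho(u)\leq\delta_1$.

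Finally, I would invoke Proposition \ref{proposition_modular_properties}(v) to pick $r_0\in(0,1)$ so small that $\|u\|<r_0$ guarantees $\rho(u)\leq\delta_1$; Proposition \ref{proposition_modular_properties}(iii) then gives $\rho(u)\geq \|u\|^q$. Substituting, we conclude $I(u)\geq \frac{1}{2q}\|u\|^q$ and $\langle I'(u), u\rangle_{\mathcal H}\geq \frac{1}{2}\|u\|^q$, so that any $\varepsilon\leq \frac{1}{2q}$ works. No real obstacle is expected: assumption \textnormal{(h$_2'$)} never enters beyond the pointwise growth \eqref{new:nonl:growth}, so the argument is essentially identical to the AR-case estimates already carried out.
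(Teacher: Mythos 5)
Your proposal is correct and follows essentially the same route as the paper: the Hardy--Littlewood--Sobolev estimate \eqref{hls-new16} combined with Proposition \ref{proposition_embeddings} to absorb the Choquard term into $\rho(u)$ using $2r_1/p,\,2r_2/p>1$, then Proposition \ref{proposition_modular_properties}(iii) to pass from $\rho(u)$ to $\|u\|^q$ for small norms (the paper treats the $\langle I'(u),u\rangle_{\mathcal H}$ bound with a brief ``similarly,'' which you spell out). The only cosmetic remark is that the smallness of $\rho(u)$ for $\|u\|<r_0$ follows most directly from Proposition \ref{proposition_modular_properties}(iii) ($\rho(u)\leq\|u\|^p$) rather than from item (v), but this does not affect the argument.
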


\begin{proof}
Similar to the proof of Lemma \ref{M-P} we get
 \begin{align*}
		I(u) \geq \frac{\rho(u)}{q}- C\left(\rho(u)^{\frac{2r_1}{p}}+ \rho(u)^{\frac{2r_2}{p}}\right).
	\end{align*}
By using Proposition  \ref{proposition_modular_properties}(iii), for $0<\|u\|<1$ we have
	\begin{align*}
		\frac{I(u)}{\|u\|^q}\geq 	\frac{I(u)}{\rho(u)}\geq \frac{1}{q}- C\left(\left(\rho(u)\right)^{\frac{2r_1}{p}-1}+ \left(\rho(u)\right)^{\frac{2r_2}{p}-1}\right)
	\end{align*}
	where $2r_2\geq 2r_1>p$ by \eqref{growth:exponent:cond}. This implies that if we choose $r>0$ small enough it follows
	\begin{align*}
		\frac{I(u)}{\|u\|^q}\geq \varepsilon\quad \text{if } 0<\|u\|<r
	\end{align*}
	for some $\varepsilon>0$. Similarly, one can establish
	\begin{align*}
		\frac{\langle I'(u),u\rangle }{\|u\|^q}\geq \varepsilon
		\quad \text{if } 0<\|u\|<r
	\end{align*}
	for some $\varepsilon>0$. This ends the proof. 
\end{proof}

\begin{proposition}\label{cerami-bdd}
	Let hypotheses \textnormal{(h$_1$)}, \textnormal{(h$_2'$)} and \textnormal{(h$_3$)} be satisfied. Then any \textnormal{(C)}$_c$-sequence of $I$ is bounded in $\V$ for any $c\in \R$.
\end{proposition}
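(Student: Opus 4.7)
The plan is to argue by contradiction: I assume a Cerami sequence $\{u_n\}$ at level $c$ with $\|u_n\|\to\infty$ along a subsequence. Since $(1+\|u_n\|)\|I'(u_n)\|_{\V^*}\to 0$, in particular $\langle I'(u_n),u_n\rangle_{\mathcal{H}}\to 0$; computing $I(u_n)-\tfrac1q\langle I'(u_n),u_n\rangle_{\mathcal{H}}$ yields
\[
c+o(1)=\Big(\tfrac1p-\tfrac1q\Big)\|u_n\|_{1,p}^p+\tfrac{1}{2q}\int_{\R^N}\left(\int_{\R^N}\tfrac{F(y,u_n)}{|x-y|^\mu}\,\diff y\right)\mathcal F(x,u_n)\,\diff x,
\]
both summands being nonnegative by Remarks \ref{rema:1} and \ref{rema:2}. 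This already forces $\|u_n\|_{1,p}^p$ and the $\mathcal F$-weighted Choquard integral to stay bounded. I next set $w_n:=u_n/\|u_n\|$, so that $\rho(w_n)=\|w_n\|=1$ by Proposition \ref{proposition_modular_properties}, extract $w_n\rightharpoonup w$ in $\V$ along a subsequence, and apply Lions' vanishing/non-vanishing dichotomy in $W^{1,p}(\R^N)$ (in which $\{w_n\}$ is bounded).

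In the \emph{vanishing} case, $w_n\to 0$ in $L^s(\R^N)$ for every $s\in(p,p^*)$, and I implement Jeanjean's device: I choose $t_n\in[0,1]$ realizing $\max_{s\in[0,1]}I(su_n)$, excluding $t_n=0$ via Lemma \ref{eps-lem}. For fixed $R\geq 1$ and $n$ large, $R/\|u_n\|<1$ and $I(t_nu_n)\geq I(Rw_n)$; the vanishing of $w_n$ in the Lebesgue exponents $2Nr_i/(2N-\mu)\in(p,p^*)$ (see \eqref{ineq:r1r2}) combined with the Hardy--Littlewood--Sobolev estimate \eqref{hls-new16} sends the Choquard part of $I(Rw_n)$ to $0$, while $\rho(w_n)=1$ yields $I(Rw_n)\geq R^p/q-o_n(1)$. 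Hence $\liminf_n I(t_nu_n)\geq R^p/q$ for every $R$, so $I(t_nu_n)\to\infty$. On the other hand, when $t_n\in(0,1)$ we have $\langle I'(t_nu_n),t_nu_n\rangle_{\mathcal{H}}=0$; using $F(y,t_nu_n)\leq F(y,u_n)$ (Remark \ref{rema:2}) and the consequence $\mathcal F(x,t_nu_n)\leq\Theta\mathcal F(x,u_n)$ of (h$_2'$)(i), together with the identity above applied at $t_nu_n$, one obtains
\[
I(t_nu_n)\leq\Theta\Big(I(u_n)-\tfrac{1}{q}\langle I'(u_n),u_n\rangle_{\mathcal{H}}\Big)\longrightarrow \Theta c
\]
(the case $t_n=1$ is trivial), contradicting $I(t_nu_n)\to\infty$.

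In the \emph{non-vanishing} case, there exist $r,\delta>0$ and $y_n\in\R^N$ with $\int_{B(y_n,r)}|w_n|^p\geq\delta$. After replacing $y_n$ by its nearest $\Z^N$-lattice point (permissible by (h$_3$)), the translates $\tilde u_n(\cdot):=u_n(\cdot+y_n)$ satisfy $I(\tilde u_n)=I(u_n)$ and $\langle I'(\tilde u_n),\tilde u_n\rangle_{\mathcal{H}}=\langle I'(u_n),u_n\rangle_{\mathcal{H}}$, and $\tilde w_n:=\tilde u_n/\|u_n\|\rightharpoonup\tilde w\not\equiv 0$. Then $|\tilde u_n|\to\infty$ pointwise on $\{\tilde w\neq 0\}$, so by (h$_2'$)(ii) and Fatou, applied on a ball $K\subset\{\tilde w\neq 0\}$ of positive measure,
\[
\liminf_n\|u_n\|^{-q/2}\int_K F(x,\tilde u_n)\,\diff x\geq L\int_K|\tilde w|^{q/2}\,\diff x \quad\text{for every } L>0.
\]
Restricting the Choquard double integral to $K\times K$ and bounding $|x-y|^{-\mu}\geq(\mathrm{diam}\,K)^{-\mu}$ yields $\liminf_n\|u_n\|^{-q}\int_{\R^N}\int_{\R^N}\tfrac{F(y,\tilde u_n)F(x,\tilde u_n)}{|x-y|^\mu}\,\diff y\,\diff x\gtrsim L^2$. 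However, $I(\tilde u_n)/\|u_n\|^q\to 0$ while the kinetic ratio $\bigl(\tfrac1p\|\tilde u_n\|_{1,p}^p+\tfrac1q(\|\nabla\tilde u_n\|_{q,a}^q+\|\tilde u_n\|_{q,a}^q)\bigr)/\|u_n\|^q$ is at most $\tfrac1q+o(1)$ by the first step and Proposition \ref{proposition_modular_properties}(iv); letting $L\to\infty$ produces the contradiction. The main technical obstacle is precisely this last step: one must extract a \emph{lower} bound of order $\|u_n\|^q$ for the Choquard integral, which forces a combination of (h$_2'$)(ii), a localization on a fixed ball where the weak limit $\tilde w$ is nontrivial, and the use of the $\Z^N$-periodicity (h$_3$) to ensure that translating along $y_n$ leaves $I$ invariant.
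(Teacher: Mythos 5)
Your argument is correct and follows essentially the same route as the paper: contradiction with $\|u_n\|\to\infty$, normalization $w_n=u_n/\|u_n\|$, the vanishing/non-vanishing dichotomy, Jeanjean's maximizer $t_n\in[0,1]$ combined with (h$_2'$)(i) and the monotonicity of $F$ in the vanishing case, and (h$_2'$)(ii) plus Fatou's lemma and the $\Z^N$-translation invariance from (h$_3$) in the non-vanishing case. The only deviations are cosmetic: the paper applies Fatou directly to the normalized double integral over $\{\tilde v\neq 0\}\times\{\tilde v\neq 0\}$ rather than localizing, and your $K$ should be a bounded measurable subset of $\{\tilde w\neq 0\}$ of positive measure (e.g.\ its intersection with a large ball) rather than a ball, since $\{\tilde w\neq 0\}$ need not contain one --- a harmless fix.
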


\begin{proof}
	We argue indirectly and suppose $\{u_n\}_{n\in\N}$ is an unbounded \textnormal{(C)}$_c$-sequence of $I$. Then, up to a subsequence, we have
	\begin{align*}
		\|u_n\|\to \infty,\quad I(u_n)\to c
		\quad\text{and}\quad \left(1 +\|u_n\|\right) I'(u_n) \to 0.
	\end{align*}
	Let $v_n=\frac{u_n}{\|u_n\|}$, then $\{v_n\}_{n\in\N}$ is bounded in $\V$. Our claim is
	\begin{align}\label{cerami=bdd-3}
		\lim_{n\to \infty}\sup_{y\in \R^N} \int_{B_2(y)}|v_n|^p\,\diff x =0 
	\end{align}
	because if, up to a subsequence,
	\begin{align*}
		\sup_{y\in \mathbb R^n} \int_{B_2(y)}|v_n|^p\,\diff x \geq \delta>0
	\end{align*}
	for some $\delta>0$, then we can choose a sequence $\{z_n\}_{n\in\N}\subset \R^N$ such that
	\begin{align*}
		\int_{B_2(z_n)}|v_n|^p\,\diff x \geq \frac{\delta}{2}.
	\end{align*}
	Since $\mathbb Z^N\cap B_2(z_n)$ can have maximum $4^N$ number of points, we can select $y_n \in \mathbb Z^N\cap B_2(z_n)$ such that
	\begin{align*}
		\int_{B_2(y_n)}|v_n|^p\,\diff x \geq \frac{\delta}{2\times 4^N}:=\tau >0.
	\end{align*}

	Now we set $\tilde{v}_n(\,\cdot\,)= v_n(\cdot +y_n)$ and see that $\rho(v_n)=\rho(\tilde{v}_n)$ due to \textnormal{(h$_3$)}. Hence, $\{\tilde{v}_n\}_{n\in\N}$ is also bounded in $\V$ which implies, up to a subsequence, that
	\begin{align*}
		\tilde{v}_n \rightharpoonup \tilde{v} \quad\text{in } \V,
		\quad \tilde{v}_n \to\tilde{v} \quad \text{in } L^p_{\loc}(\R^N)
		\quad\text{and}\quad\tilde{v}_n(x) \to  \tilde{v}(x)\quad \text{a.\,e.\,in } \R^N
	\end{align*}
	for some $\tilde v \in \V$. From
	\begin{align*}
		\int_{B_2(0)}|\tilde{v}_n|^p\,\diff x =\int_{B_2(y_n)}|v_n|^p\,\diff x \geq \tau>0,
	\end{align*}
	we know  that $\tilde v\not\equiv 0$. We further set $\tilde{u}_n = \tilde{v}_n \|u_n\|$ and have that $|\tilde{u}_n(x)|\to \infty$ if $\tilde{v}(x)\neq 0$. 
	Using \textnormal{(h$_2'$) (iv)}, we get
	\begin{align}\label{cerami-bdd-2}
		\frac{F(x,\tilde u_n(x))|\tilde v_n(x)|^{q/2}}{|\tilde u_n(x)|^{q/2}} \to \infty\quad\text{for all } x\in \Omega,
	\end{align}
	where $\Omega = \{x\in \R^N\,:\, \tilde{v}(x) \neq 0\}$ has positive measure. Since $\lim_{n\to \infty}I(u_n)=c$, we get, for $\|u_n\|>1$, that
	\begin{align*}
		\int_{\R^N}\left( \int_{\R^N} \frac{F(y,u_n)}{|x-y|^\mu}\,\diff y\right) F(x,u_n) \,\diff x 
		&= -(c+o(1)) + \frac{\|u_n\|^p_{1,p}}{p}+\frac{\|\nabla u_n\|_{q,a}^q+\|u_n\|_{q,a}^q}{q} \\
		&\leq -(c+o(1)) + \frac{\rho(u_n)}{p}\\
		& \leq -(c+o(1)) + \frac{\|u_n\|^q}{p}.
	\end{align*}
	Changing the variables and applying \eqref{cerami-bdd-2} gives
	\begin{align*}
		\frac{-(c+o(1))}{\|u_n\|^q}+\frac1p & \geq \int_{\R^N}\left( \int_{\R^N} \frac{F(y,\tilde u_n)}{|x-y|^\mu}\,\diff y\right) \frac{F(x,\tilde u_n)}{\|u_n\|^q} \,\diff x\\
		&=  \int_{\R^N}\left( \int_{\R^N} \frac{F(y,\tilde u_n) |\tilde v_n|^{\frac{q}{2}}}{|x-y|^\mu|\tilde u_n|^{\frac{q}{2}}}\,\diff y\right) \frac{F(x,\tilde u_n)|\tilde v_n|^{\frac{q}{2}}}{|\tilde u_n|^{\frac{q}{2}}} \,\diff x\\
		& \geq \int_{\Omega}\left( \int_{\Omega} \frac{F(y,\tilde u_n) |\tilde v_n|^{\frac{q}{2}}}{|x-y|^\mu|\tilde u_n|^{\frac{q}{2}}}\,\diff y\right) \frac{F(x,\tilde u_n)|\tilde v_n|^{\frac{q}{2}}}{|\tilde u_n|^{\frac{q}{2}}} \,\diff x
		\to \infty.
	\end{align*}
	But this is impossible and so \eqref{cerami=bdd-3} must hold. Then, from \eqref{cerami=bdd-3} and Lions' lemma \cite[Lemma I.1]{Lions-1984} we have
	\begin{align}\label{cerami-bdd-4}
		v_n \to 0 \quad \text{in } L^s(\R^N) \quad \text{for any } s \in (p,p^*).
	\end{align}
	By the continuity of the map $t\mapsto I(tu_n)$ for $t\in [0,1]$ and each fixed $n\in \mathbb N$, we can find  a sequence $\{t_n\}_{n\in\N}\in [0,1]$ such that
	\begin{align}\label{cerami-bdd-5}
		I(t_nu_n)= \max_{t\in [0,1]} I(tu_n). 
	\end{align}
	Next we are going to show that
	\begin{align}\label{cerami-bdd-7}
		\lim_{n\to \infty} I(t_n u_n)=\infty.
	\end{align}
	Since $\|u_n\|\to \infty$, we can choose $M>1$ and $n\in\N$ sufficiently large such that $\frac{M}{\|u_n\|}\in (0,1)$. For such $n$, using \eqref{cerami-bdd-5}, we get
	\begin{align}\label{cerami-bdd-6}
		\begin{split}
			I(t_nu_n) \geq  I\left(\frac{Mu_n}{\|u_n\|}\right) 
			&= I(Mv_n)\\
			& \geq \frac{\min\left\{M^p,M^q\right\}}{q}\rho(v_n) - \int_{\R^N}\left( \int_{\R^N} \frac{F(y,M v_n)}{|x-y|^\mu}\,\diff y\right) {F(x,Mv_n)}\,\diff x\\
			&=\frac{\min\left\{M^p,M^q\right\}}{q} - \int_{\R^N}\left( \int_{\R^N} \frac{F(y,M v_n)}{|x-y|^\mu}\,\diff y\right) {F(x,Mv_n)}\,\diff x,
		\end{split} 
	\end{align}
	since $\|v_n\|=1$ implies $\rho(v_n)=1$, see Proposition \ref{proposition_modular_properties}(ii). Using the estimate as in \eqref{hls-new16} and applying \eqref{cerami-bdd-4} as well as \eqref{growth:exponent:cond}, one has
	\begin{align*}
		\int_{\R^N}\left( \int_{\R^N} \frac{F(y,Mv_n)}{|x-y|^\mu}\,\diff y\right)F(x,Mv_n)\,\diff x &\leq C_1  \left(\int_{\R^N}\left(|Mv_n|^{\frac{2Nr_1}{2N-\mu}}+ |Mv_n|^{\frac{2Nr_2}{2N-\mu}}\right)\,\diff x\right)^{\frac{2N-\mu}{N}}\\
		& \leq C_1M^{2r_2}\left(\int_{\R^N}\left(|v_n|^{\frac{2Nr_1}{2N-\mu}}+ |v_n|^{\frac{2Nr_2}{2N-\mu}}\right)\,\diff x\right)^{\frac{2N-\mu}{N}}\\
		&\to 0 \quad \text{as } n \to \infty.
	\end{align*}
	Inserting this in \eqref{cerami-bdd-6} yields
	\begin{align*}
		I(t_n u_n)\geq \frac{\min\left\{M^p,M^q\right\}}{q}+o_n(1)
	\end{align*}
	which holds for any $M>1$. This proves  \eqref{cerami-bdd-7}. 

	Now we have $I(0)=0$, $\lim_{n\to \infty}I(u_n)=c$ and $\langle I'(t_nu_n),t_nu_n \rangle_{\mathcal{H}} =0$ for $t_n\in (0,1)$. Then, from \eqref{Non-AR-cond} and Remark \ref{rema:2}, it follows that
	\begin{align*}
		\frac{1}{\Theta} I(t_nu_n) &= \frac{1}{\Theta}I(t_nu_n)-\frac{1}{\Theta}\frac{1}{q}\langle I'(t_nu_n),t_nu_n \rangle_{\mathcal{H}} \\
		&= \frac{1}{\Theta} \left[\frac{t_n^p\|u_n\|_{1,p}^p}{p}+ \frac{t_n^q}{q}\left(\|\nabla u_n\|_{q,a}^q+\|u_n\|_{q,a}^q\right) -\frac{1}{q}\rho(t_nu_n) \right.\\
		&\qquad\quad\left. + \frac{1}{2q} \int_{\R^N}\left( \int_{\R^N} \frac{F(y,t_nu_n)}{|x-y|^\mu}\,\diff y\right)(2f(x,t_nu_n)t_nu_n - qF(x,t_nu_n))\,\diff x\right]\\
		&= \frac{1}{\Theta}\left[\frac{t_n^p\|u_n\|_{1,p}^p}{p}+ \frac{t_n^q}{q}\left(\|\nabla u_n\|_{q,a}^q+\|u_n\|_{q,a}^q\right) -\frac{1}{q}\rho(t_nu_n) \right]\\
		&\quad + \frac{1}{2q\Theta} \int_{\R^N}\left( \int_{\R^N} \frac{F(y,t_nu_n)}{|x-y|^\mu}\,\diff y\right)\mathcal F(x,t_nu_n)\,\diff x\\
		&\leq \frac{1}{\Theta}\left[\frac{\|u_n\|_{1,p}^p}{p} -\frac{\|u_n\|_{1,p}^p}{q} \right] + \frac{1}{2q} \int_{\R^N}\left( \int_{\R^N} \frac{F(y,t_nu_n)}{|x-y|^\mu}\,\diff y\right)\mathcal F(x,u_n)\,\diff x\\
		& \leq \frac{\|u_n\|_{1,p}^p}{p} -\frac{\|u_n\|_{1,p}^p}{q} + \frac{1}{2q} \int_{\R^N}\left( \int_{\R^N} \frac{F(y,u_n)}{|x-y|^\mu}\,\diff y\right)
		\mathcal F(x,u_n)\,\diff x\\
		& = \frac{\|u_n\|_{1,p}^p}{p} -\frac{\|u_n\|_{1,p}^p}{q} \\
		&\quad +\frac{1}{2q} \int_{\R^N}\left( \int_{\R^N} \frac{F(y,u_n)}{|x-y|^\mu}\,\diff y\right)(2f(x,u_n)u_n - qF(x,u_n))\,\diff x\\
		&= I(u_n)- \frac{1}{q}\langle I'(u_n),u_n \rangle_{\mathcal{H}} \to c \quad \text{as}\;n\to \infty,
	\end{align*}
	which contradicts \eqref{cerami-bdd-7}. Therefore, we can conclude that $\{u_n\}_{n\in\N}$ must be bounded in $\V$.
\end{proof}

Now we can give the proof of Theorem \ref{main_result-new} which says that problem \eqref{problem} has a nontrivial ground state solution $u_0\in\V$ under hypotheses \textnormal{(h$_1$)}, \textnormal{(h$_2'$)} and \textnormal{(h$_3$)}, that is,
\begin{align*}
	I(u_0)= \inf\left\{I(u)\,:\, u\not \equiv 0 \text{ and } \langle I'(u),u\rangle =0\right\}.
\end{align*}

\begin{proof}[Proof of Theorem \ref{main_result-new}]
	From Lemma \ref{lemma-positive-b} we know that the mountain pass level $c:=b>0$, so there exists a \textnormal{(C)$_c$}-sequence $\{u_n\}_{n\in\N}$ of $I$ at the level $c$.  Moreover, Proposition \ref{cerami-bdd} implies the boundedness of $\{u_n\}_{n\in\N}$  in $\V$. Let us define
	\begin{align*}
		\delta := \lim_{n\to \infty}\sup_{y\in \R^N}\int_{B_2(y)}|u_n|^p\,\diff x.
	\end{align*}
	Suppose $\delta=0$. Then, by Lions' lemma \cite[Lemma I.1]{Lions-1984}, we have $u_n\to 0$ in $L^s(\R^N)$ for any $s\in (p,p^*)$.  Using the estimates in \eqref{hls-new16}, we easily conclude that
	\begin{align}\label{mt2-1}
		\begin{split}
			&\lim_{n\to \infty}\int_{\R^N}\left( \int_{\R^N} \frac{F(y,u_n)}{|x-y|^\mu}\,\diff y\right) F(x,u_n)\,\diff x =0,\\ &\lim_{n\to \infty}\int_{\R^N}\left( \int_{\R^N} \frac{F(y,u_n)}{|x-y|^\mu}\,\diff y\right) f(x,u_n)u_n\,\diff x =0.
		\end{split}
	\end{align}
	Due to $\langle I'(u_n),u_n\rangle_{\mathcal{H}}=0$ and the second limit in \eqref{mt2-1} we get
	\begin{align}\label{mt2-2}
		\|\nabla u_n\|^q_{q,a}+\|u_n\|_{q,a}^q \to 0
		\quad \text{as } n\to \infty.
	\end{align}
	Therefore, using the fact $\{u_n\}_{n\in\N}$ is a \textnormal{(C)$_c$}-sequence along with \eqref{mt2-2} we obtain
	\begin{align*}
		c &= \lim_{n\to \infty} \left(I(u_n) - \frac{1}{p}\langle I'(u_n),u_n\rangle \right)\\
		& = \lim_{n\to \infty} \left( \frac{p-q}{pq}(\|\nabla u_n\|^q_{q,a}+\|u_n\|_{q,a}^q)\right.\\
		&\quad \left.-\frac{1}{2p}  \int_{\R^N}\left( \int_{\R^N} \frac{F(y,u_n)}{|x-y|^\mu}\,\diff y\right)(2f(x,u_n)u_n - pF(x,u_n))\,\diff x \right)\\
		& \to 0 \quad \text{as } n\to \infty,
	\end{align*}
	which is a contradiction to $c>0$. Thus $\delta>0$ and there must be a sequence $\{y_n\}_{n\in\N}\subset \mathbb Z^N$ and a real number $\kappa>0$ such that
	\begin{align}\label{mt2-3}
		\int_{B_2(0)}|v_n|^p\,\diff x = \int_{B_2(y_n)}|u_n|^p\,\diff x >\kappa>0,
	\end{align}
	where $v_n(\,\cdot\,)= u_n(\cdot + y_n)$. Since $\varrho_{\mathcal{H}}(u_n)=\varrho_{\mathcal{H}}(v_n)$ and $(h_3)$ holds, we get $\{v_n\}_{n\in\N}$ to be a bounded sequence in $\V$. Hence there exists $\tilde v \in \V$ such that
	\begin{align*}
		v_n \rightharpoonup \tilde{v}  \quad\text{in } \V
		\quad\text{and}\quad 
		v_n \to \tilde{v}  \quad\text{in } L^p_{\loc}(\R^N).
	\end{align*}
	From \eqref{mt2-3} we know that $\tilde{v}  \neq 0$ a.\,e.\,in $\R^N$. By \textnormal{(h$_3$)}, we assert that $\{v_n\}_{n\in\N}$ is a \textnormal{(C)}$_c$-sequence of $I$ and by Proposition \ref{grad-con}, we have for any $\phi \in C_c^\infty(\R^N)$
	\begin{align*}
		\langle I'(\tilde v),\phi \rangle = \lim_{n\to \infty} \langle I'( v_n),\phi \rangle =0,
	\end{align*}
	which says that $\tilde v$ is a nontrivial solution of \eqref{problem}. Now let
	\begin{align*}
		\alpha = \inf\left\{I(u)\,:\,\; u\not \equiv 0 \text{ and } \langle I'(u),u\rangle=0\right\}.
	\end{align*}
	Let $u$ be an arbitrary critical point of $I$. Then Remark \ref{rema:1} helps to get that
	\begin{align*}
		I(u) & = I(u) - \frac{1}{q} \langle I'(u),u \rangle \\
		& = \frac{q-p}{qp}\|u\|_{1,p}^p + \frac{1}{2q}  \int_{\R^N}\left( \int_{\R^N} \frac{F(y,u)}{|x-y|^\mu}\,\diff y\right)\mathcal F(x,u)\,\diff x\geq 0,
	\end{align*}
	since $q>p$. Thus $\alpha \geq 0$ and $\alpha \leq I(\tilde v)<\infty$. We know that there exists sequence $\{w_n\}_{n\in\N}$ of nontrivial critical points of $I$ such that
	\begin{align*}
		I(w_n)\to \alpha\quad \text{as } n\to \infty.
	\end{align*}
	Since $I'(w_n)=0$, by Lemma \ref{eps-lem}, we can find $r_0>0$ such that
	\begin{align}\label{mt2-4}
		\|w_n\|\geq r_0 \quad \text{for all }n\in\N.
	\end{align}
	Furthermore, we see that
	\begin{align*}
		(1+\|w_n\|) I'(w_n) \to 0\quad \text{as } n\to \infty.
	\end{align*}
	This implies that $\{w_n\}_{n\in\N}$ is a \textnormal{(C)}$_\alpha$-sequence of $I$ at $\alpha$ and Proposition \ref{cerami-bdd} says that $\{w_n\}_{n\in\N}$ must be bounded. Let
	\begin{align*}
		\delta_1 = \lim_{n\to \infty}\sup_{y\in \R^N}\int_{B_2(y)}|w_n|^p\,\diff x.
	\end{align*}
	Then $\delta_1=0$ implies 
	\begin{align*}
		\lim_{n\to \infty}\int_{\R^N}\left( \int_{\R^N} \frac{F(y,w_n)}{|x-y|^\mu}\,\diff y\right) f(x,w_n)w_n\,\diff x =0
	\end{align*}
	by following the same arguments as for \eqref{mt2-1}. This leads to
	\begin{align*}
		\rho(w_n)&= \langle I'(w_n),w_n\rangle + \int_{\R^N}\left( \int_{\R^N} \frac{F(y,w_n)}{|x-y|^\mu}\,\diff y\right) f(x,w_n)w_n\,\diff x \to 0
	\end{align*}
	since $I'(w_n)=0$. Hence, $\|w_n\|\to 0$ as $n\to \infty$ by Proposition \ref{proposition_modular_properties}, which contradicts \eqref{mt2-4}. Therefore $\delta_1> 0$. We set $\tilde{w}_n(\,\cdot\,) = w_n(\cdot+y_n)$. Similar arguments as used before in Propositions \ref{choq-conv} and \ref{grad-con}, for any $\phi \in C_c^\infty(\R^N)$  imply that $I'(\tilde{w}_n)=0$, $I'(w_n)=I'(\tilde{w}_n)\to \alpha$ and for some $\tilde w\in \V$, we have
	\begin{align*}
		\tilde{w}_n \rightharpoonup \tilde{w}\not\equiv 0 \quad \text{in }\V 
		\quad \text{and} \quad 
		\nabla \tilde{w}_n \to \nabla \tilde{w} \quad \text{pointwise a.\,e.\,in }  \R^N.
	\end{align*}
	Obviously $\langle I'(\tilde{w}_n),\phi\rangle =0$ for all $\phi \in C_c^\infty(\R^N)$, that is $\tilde{w}$ is a critical point of $I$. Using again the density of $C_c^\infty(\R^N)$ in $\V$ given in \cite[Proposition 6.4.4]{Harjulehto-Hasto-2019}, we obtain $I'(\tilde{w})=0$. 
	
	It remains to show that $\tilde{w}$ is a ground state solution of problem \eqref{problem}, that is $I(\tilde{w})=\alpha$. Applying Fatou's lemma, the pointwise convergence of $\tilde{w}_n$ and $\nabla \tilde{w}_n$ along with Remarks \ref{rema:1} and \ref{rema:2}, it follows that
	\begin{align*}
		I(\tilde w) & = I(\tilde w) -\frac{1}{q}\langle I'(\tilde w),\tilde w \rangle\\
		& = \frac{q-p}{pq}\|\tilde w\|_{1,p}^p + \frac{1}{2q} \int_{\R^N}\left( \int_{\R^N} \frac{F(y,\tilde w)}{|x-y|^\mu}\,\diff y\right) \mathcal F(x,\tilde w)\,\diff x\\
		&\leq \liminf_{n\to \infty} \left(\frac{q-p}{pq}\|\tilde w_n\|_{1,p}^p + \frac{1}{2q} \int_{\R^N}\left( \int_{\R^N} \frac{F(y,\tilde w_n)}{|x-y|^\mu}\,\diff y\right) \mathcal F(x,\tilde w_n)\,\diff x\right)\\
		& = \liminf_{n\to \infty} \left( I(\tilde w_n) -\frac{1}{q}\langle I'(\tilde w_n),\tilde w_n \rangle\right) = \alpha .
	\end{align*}
	Since $I'(\tilde w)=0$, we obtain $I(\tilde{w})=\alpha$ which finishes the proof.
\end{proof}

 \section*{Acknowledgments}
  A.\,Fiscella is member of the {Gruppo Nazionale per l'Analisi Ma\-tema\-tica, la Probabilit\`a e
 le loro Applicazioni} (GNAMPA) of the {Istituto Nazionale di Alta Matematica ``G. Severi"} (INdAM).
 A.\,Fiscella realized the manuscript within the auspices of the INdAM-GNAMPA project titled "Equazioni alle derivate parziali: problemi e modelli" (Prot\_20191219-143223-545) and of the FAPESP Thematic Project titled "Systems and partial differential equations" (2019/02512-5).

\end{document}